\newcommand{\mc}{\mathcal}
\newcommand{\cp}{\times}
\newcommand{\bol}{\boldsymbol}
\newcommand{\abs}[1]{\left\lvert{#1}\right\rvert}
\newcommand{\w}{\wedge}
\newcommand{\lr}[1]{\left({#1}\right)}
\newcommand{\lrs}[1]{\left[{#1}\right]}
\newcommand{\lrc}[1]{\left\{{#1}\right\}}
\newcommand{\mf}{\mathfrak}
\newcommand{\p}{\partial}
\newcommand{\ti}[1]{\textit{#1}}
\newcommand{\tb}[1]{\textbf{#1}}
\theoremstyle{plain}
\newtheorem{theorem}{Theorem}[section]
\newtheorem{proposition}[theorem]{Proposition}
\newtheorem{corollary}[theorem]{Corollary}
\theoremstyle{definition}
\newtheorem{mydef}[theorem]{Definition}
\theoremstyle{remark}
\newtheorem{example}[theorem]{Example}
\newtheorem{remark}[theorem]{Remark}
\newcommand{\R}{\mathbb{R}}
\newcommand{\eq}[1]{\begin{equation}\begin{split}{#1}\end{split}\end{equation}}
\newcommand{\sys}[2]{\begin{subequations}\begin{align}{#1}\end{align}\label{#2}\end{subequations}}
\newcommand{\M}{M} 
\begin{document}

\title{
    Generalizing  Hamiltonian Mechanics with  Closed Differential Forms
}
\author[1]{Nathan Duignan} \author[2]{Naoki Sato}
\affil[1]{Department of Mathematics and Statistics, University of Sydney, \protect\\
    NSW, 2006, Australia \protect\\ Email: nathan.duignan@sydney.edu.au}
\affil[2]{National Institute for Fusion Science, \protect\\
    322-6 Oroshi-cho, Toki-city, Gifu 509-5292, Japan \protect\\ Email: sato.naoki@nifs.ac.jp}
\date{\today}
\setcounter{Maxaffil}{0}
\renewcommand\Affilfont{\itshape\small}

\maketitle
\begin{abstract}
    Classical Hamiltonian mechanics, characterized by a single conserved Hamiltonian (energy) and symplectic geometry, `hides' other invariants into symmetries of the Hamiltonian or into the kernel of the Poisson tensor.
    Nambu mechanics aims to generalize classical Hamiltonian mechanics to ideal dynamical systems bearing two Hamiltonians, but its connection to a suitable geometric framework has remained elusive.
    This work establishes a novel correspondence between generalized Hamiltonian mechanics, defined for systems with a phase space conservation law (invariance of a closed form) and a matter conservation law (invariance of multiple Hamiltonians), and multisymplectic geometry. The key lies in the invertibility of differential forms of degree higher than 2.
    We demonstrate that the cornerstone theorems of classical Hamiltonian mechanics (Lie-Darboux and Liouville) require reinterpretation within this new framework, reflecting the unique properties of invertibility in multisymplectic geometry.  Furthermore, we present two key theorems that solidify the connection: i) any classical Hamiltonian system with two or more invariants is also a generalized Hamiltonian system and ii) given a generalized Hamiltonian system with two or more invariants, there exists a corresponding classical Hamiltonian system on the level set of all but one invariant, with the remaining invariant playing the role of the Hamiltonian function.
\end{abstract}


\section{Introduction}
Classical Hamiltonian mechanics represents the
mathematical structure underlying the laws of physics.
It describes the dynamics of ideal (not subject to dissipation) systems
through conservation of energy (a property of matter encoded in the Hamiltonian $H$)
and, most importantly,
the conservation of the ambient space (the phase space characterized by
the symplectic $2$-form $\omega$ and the associated Liouville measure).
In its most general formulation, classical Hamiltonian mechanics arises as
a noncanonical Hamiltonian theory \cite{Morrison80,Morrison82} with a
Poisson bracket assigned by a contravariant Poisson $2$-tensor $\mc{J}$.
Remarkably, once the kernel of the Poisson tensor $\mc{J}$, spanned by the so-called Casimir invariants \cite{Littlejohn82,Morrison98}, is appropriately factored out,
a precise correspondence can be identified between $\omega$ and $\mc{J}$:
essentially, one is the \ti{inverse} of the other.
This property builds a duality between symplectic geometry and Poisson algebras,
which can be understood in terms of the Lie-Darboux \cite{Arnold89,DeLeon89} and Liouville theorems. These theorems establish that given an $n$-dimensional Poisson tensor $\mc{J}$
it is possible to identify local coordinates $\lr{p^1,...,p^m,q^1,...,q^m,C^1,...,C^s}$, with $n=2m+s$, such that the Lioville measure $dp^1\w...\w dp^m\w dq^1\w...\w dq^m\w dC^1\w...\w dC^s$ is invariant under the Hamiltonian flow, which in turn satisfies
canonical Hamiltonian equations on each level set of the Casimir invariants with symplectic $2$-form $\omega=\sum_{i=1}^m d p^i\w dq^i$.

The aim of generalized Hamiltonian mechanics
is to replace the $2$-dimensional building block $dp^i\w dq^i$ of classical phase space
with a higher $k$-dimensional building block with $k>2$, starting from the $k=3$ case $dp^i\w dq^i\w dr^i$ originally examined by Nambu \cite{Nambu}.
It is common to refer to the case $k=3$ as Nambu mechanics, and to the corresponding bracket generating dynamics by contraction with $k-1=2$ Hamiltonians as a Nambu bracket.
It should be emphasized that a higher dimensional building block
implies that one Hamiltonian is no longer sufficient to generate
the dynamics, because the contraction of a $k$-form with a vector field
produces a $k-1$-form. In particular, a generalized Hamiltonian theory of degree $k$ requires $k-1$ Hamiltonians.
Therefore, another way of thinking about generalized Hamiltonian mechanics is to understand it as the mathematical structure associated with the ideal dynamics of systems with more than one matter conservation law.
This interpretation is however somewhat naive because it hides the fact that
the key issue is how a higher number of Hamiltonians affects the phase space conservation law. In particular, we will see that in the type of generalized Hamiltonian mechanics discussed in the present work
the conservation of the symplectic $2$-form $\omega$ is replaced
by the conservation of a closed differential form of degree $k$.

One may wonder why do we need to study generalized Hamiltonian mechanics in the first place.
Indeed, classical Hamiltonian systems may possess other invariants in addition to the Hamiltonian, such as the Casimir invariants associated with the Poisson tensor $\mc{J}$, or any conservation law arising from a symmetry of the system as described by Noether's theorem. The matter of the fact is that, as it will be shown in the present study, a generalized Hamiltonian theory
of degree $k$ is not a subset of a classical Hamiltonian theory with $1$ Hamiltonian and $k-2$ additional conservation laws arising from symmetries of the Hamiltonian and/or from the kernel of the Poisson tensor. In other words, there are generalized Hamiltonian systems that cannot be described within the framework of classical Hamiltonian mechanics, and yet possess both phase space and matter conservation laws.

Another more practical reason to investigate generalized Hamiltonian mechanics is that
several physical systems often arise with multiple fundamental invariants.
In many cases there are $2$ invariants, the first representing the total energy of the system, while the second is usually associated with rotations (e.g. angular momentum in a rigid body or enstrophy in an ideal fluid) or exhibits a topological character (for example, helicity in an ideal fluid). In fact, the identification of Nambu brackets for fluid systems represents an active area of research \cite{Blender,Nevir,Fukumoto}.
In addition, several applications of Nambu mechanics have been proposed.
A non-exhaustive list includes M-theory \cite{Ho}, bracket quantization \cite{Yoshida,Cha,Awata,Dito,Bayen}, extension of Hamilton-Jacobi theory to Nambu mechanics \cite{Yon}, and formulation of metriplectic ternary brackets to describe dissipation \cite{Bloch13}.

Unfortunately, it is well known that the generalization of the notion of Poisson algebra arising in classical Hamiltonian mechanics to generalized Hamiltonian systems
is a non-trivial mathematical problem. The reason is that the Jacobi identity \cite{Olv}, which is the Poisson bracket axiom
guaranteeing the local existence of the symplectic $2$-form $\omega$,
does not admit a straightforward extension to the cases $k>2$.
Originally, a Lie-algebraic formulation of Nambu mechanics was first provided in
\cite{Bial91} which relied on an underlying Lie bracket bearing a noncanonical Hamiltonian structure. Later, the so-called fundamental identity has been proposed as replacement of the Jacobi identity \cite{Takhtajan1994}.
The fundamental identity has the merit that the bracket obtained by contracting
the Nambu bracket with one of the two Hamiltonians is a Poisson bracket, i.e. it satisfies the Jacobi identity.
However, the formulation of Nambu mechanics in terms of the fundamental identity suffers a limited range of applicability: even a fully antisymmetric constant contravariant $3$-tensor does not necessarily satisfy the fundamental identity. This is contrast with the Jacobi identity, which is always satisfied by any antisymmetric constant contravariant $2$-tensor.
As a consequence, there are classical Hamiltonian systems with $2$ invariants that cannot be described by a Nambu bracket endowed with the fundamental identity.
The axiomatic formulation of Nambu mechanics is discussed in detail in \cite{Sato21,Sato24}, where the existence of a closed $3$-form $w$ preserved by the dynamics is proposed as an alternative to the fundamental identity.
This formulation has the advantage of being less restrictive than the fundamental identity
(all constant contravariant $3$-tensors now qualify as Poisson $3$-tensors). However, the theory requires the subtle notion of invertibility of a differential form of degree $k$.

To avoid confusion, in the following we will refer to a
generalized Hamiltonian theory of degree $k$ in the sense of Definition \ref{def:generalisedHamiltonian} of Section \ref{sec:GeneralizedHamiltonianMechanics}.
Such theory is characterized by a phase space conservation law (the invariance of a closed differential form of degree $k$) and a matter conservation law (the invariance of $k-1$ Hamiltonian functions). When $k=3$, we may sometime talk about Nambu mechanics,
and refer to Nambu(FI) mechanics when dealing with the definition
of Nambu mechanics in terms of the fundamental identity.

Building on the notion of generalized Hamiltonian mechanics formulated in \cite{Sato21,Sato24}, the aim of this paper is to relate
a generalized Hamiltonian theory of degree $k>2$ to multisymplectic geometry (the geometry of closed differential forms) \cite{Ryvkin,Can} with the aid of the notion of invertibility of a differential form. As a result, we recover a duality between generalized Hamiltonian mechanics and multisymplectic geometry (note that here we use the term multisymplectic in a loose sense that does not explicitly include non-degeneracy conditions, but simply refers to the availability of a closed differential form).

The present paper is organized as follows. In Section \ref{sec:GeneralizedHamiltonianMechanics},
we define the notion of generalized Hamiltonian system.
In Section \ref{sec:classicalCorrespondance}, we prove two theorems
connecting a generalized Hamiltonian theory of degree $k$
with classical Hamiltonian mechanics with two or more invariants.
In particular, we find that i) any classical Hamiltonian system with two or more invariants is also a generalized Hamiltonian system and ii) given a generalized Hamiltonian system with two or more invariants
there exists a corresponding classical Hamiltonian system on the level set of all but one invariant, with the remaining invariant playing the role of the Hamiltonian function.
In Section \ref{sec:flatnessDarbouxEtc}, we explain how the cornerstone theorems of classical Hamiltonian mechanics (Lie-Darboux and Liouville) require reinterpretation within the new framework, reflecting the unique properties of invertibility in multisymplectic geometry, and prove
Lie-Darboux and Liouville type theorems within the generalized Hamiltonian setting.
Section \ref{sec:JacobiIdentity} discusses the role of the Jacobi identity in generalized Hamiltonian mechanics, its relationship with the fundamental identity, as well as the availability of invariant measures.
In Section \ref{sec:examples}, we provide three examples of generalized Hamiltonian systems.
In the first example, we show that quasisymmetric magnetic fields \cite{Rod}
can be described as a generalized Hamiltonian system of degree $k=3$ where
the Hamiltonians are given by the magnetic flux function and the magnetic field strength.
In the second example, we exhibit the case of a classical Hamiltonian system
with two invariants that can be cast as a generalized Hamiltonian system of degree $k=3$, but yet the corresponding Nambu bracket fails to satisfy the fundamental identity.
The third example concerns a $4$-dimensional dynamical system governed by an antisymmetric contravariant $2$-tensor failing to satisfy the Jacobi identity, and yet bearing a $4$-dimensional generalized Hamiltonian structure of degree $k=4$. We also show how the results of Theorem \ref{thm:classicalFromGeneralised} below can be applied to recover a classical Hamiltonian system on a $2$-dimensional submanifold of the original $4$-dimensional domain.
Concluding remarks are given in Section \ref{sec:ConcludingRemarks}.

\subsection{Conventions on pairing and contraction of forms and multi-vectors}

Here we summarize the notation and conventions adopted in this paper.
Einstein summation convention on repeated indices will be used.
Moreover, when there is no ambiguity, a lower index will be used to specify partial derivatives. For example, given a contravariant $2$-tensor
with components
$\mc{J}^{ij}$ we will often write $\mc{J}^{ij}_m=\p \mc{J}^{ij}/\p x^m$, where $x^m$ denotes the $m^{\text{th}}$ coordinate.

For any vector space $V$ denote by $\Lambda^k(V)$ the set of $k$-vectors and by $\Lambda^k(V^*)$ the set of $k$-forms over $V$. For any smooth manifold $\M$ of dimension $n$, denote by $\Omega^k(\M)$ the set of smooth differential $k$-forms on $\M$, by $\mc{X}^k(\M)$ the set of smooth $k$-vector fields on $\M$, and by $C^\infty(\M)$ the set of smooth functions on $\M$.

For any vector field $X\in \mc{X}(M)$, denote the interior product by
\[\iota_X: \Omega^k(M) \to \Omega^{k-1}(M),\qquad \alpha \mapsto \iota_X \alpha.\]
Further, we extend the interior product for any $J \in \mc{X}^m(M)$ so that, for decomposable elements which have the form $J = J_1\w\dots\w J_m$, $J_i \in \mc{X}(M)$, the interior product is given by
\[ \iota_J = \iota_{J_m}\dots\iota_{J_1} :\Omega^k(M) \to \Omega^{k-m}(\Omega).\]
The interior product is then defined for all $J\in\mc{X}^m(M)$ by linearity. Similarly, for any $\alpha\in\Omega^1(M)$ denote the interior product by $\iota_\alpha: \mc{X}^k(M) \to \mc{X}^{k-1}(M)$, and for any $w \in \Omega^m(\M)$ define the interior product for decomposable elements which have the form $w = \alpha^1 \w \dots \w \alpha^m$ as $\iota_w = \iota_{\alpha^m}\dots\iota_{\alpha^1}:\mc{X}^k(\M)\to\mc{X}^{k-m}(\M)$, and extend to all elements by linearity.

If $p\in \M$ and $U$ is a local chart of $p$, we will denote by $(x^1,\dots,x^n):U \to \R^n $ as the coordinates of the chart and let $\lr{\p_1,...,\p_n}$ and $\lr{dx^1,...,dx^n}$ denote a basis of $\mc{X}(U)$ and dual basis of $\Omega(U)$, respectively. We will use the shorthand notation
\[\partial_{i_1\dots i_m} = \partial_{i_1}\w{...\w}\p_{i_m} \qquad {dx^{i_1...i_m}}=dx^{i_1}\w{\dots\w} dx^{i_m}.\]

With the above notation, the following local constructions of the interior product and natural pairing between $\Omega^k(M)$ and $\mc{X}^k(M)$ can be inferred.
\begin{itemize}
    \item \ti{Pairing}: Extend the pairing $\langle \p_i,dx^j\rangle=\delta_i^j$ at each point $p\in U$ to basis elements of $\bigwedge^k T_p\M$ and $\bigwedge^k T^{\ast}_p\M$ according to
          \eq{
              \langle \p_{i_1}\w ... \w \p_{i_k}, dx^{j_1}\w ...\w dx^{j_k} \rangle=\delta^{i_1...i_k}_{j_1...j_k},~~~~i_1<i_2<...<i_k,~~~~j_1<j_2<...<j_k,\label{contr}
          }
          where $\delta^{i_1...i_k}_{j_1...j_k}=\delta_{j_1}^{i_1}...\delta^{i_k}_{j_k}$. Observe that, for a given $\alpha\in\Omega^{k}\lr{\M}$ and $J\in\mc{X}^k\lr{\M}$, it holds that $\iota_{J}\alpha= \langle \alpha, J \rangle =: \alpha\lr{J}$.
    \item \ti{Contraction}: As an example of local computation of contraction, take $\mc{J}\in\mc{X}^2(U)$ and $dH\in \Omega(U)$. The (left) contraction between $\mc{J}$ and $dH$ can be evaluated as
          \eq{
          \iota_{dH}\mc{J}=\iota_{dH}\sum_{i<j}\mc{J}^{ij}\p_{ij}
          =-\mc{J}^{ij}H_j\p_j,
          }
          where $H_j=\p H/\p x^j$.
          Similarly, take $J\in\mc{X}^3(U)$ and $dC\in \Omega(U)$. 
          \eq{
          \iota_{dH}\iota_{dC}J
          =\iota_{dH}\sum_{j<k}J^{ijk}C_i\p_{jk}=-J^{ijk}H_jC_k\p_i.
          }
\end{itemize}
The contraction between forms and multivectors are (locally) can be locally defined through the pairing \eqref{contr}. For example, for $m<n$ and $i<j<k$ we have
\eq{
    \iota_{dx^{mn}}\p_{ijk}=\langle dx^{mn},\p_{ij}\rangle\p_k+\langle dx^{mn},\p_{jk}\rangle\p_i-\langle dx^{mn},\p_{ik}\rangle\p_j=\delta_{ij}^{mn}\p_k+\delta_{jk}^{mn}\p_i-\delta_{ik}^{mn}\p_j.
}
Similarly
\eq{
\iota_{dC\w dH}J=&\sum_{m<n,i<j<k}J^{ijk}\lr{C_mH_n-C_nH_m}\iota_{dx^{mn}}\p_{ijk}\\
=&
\sum_{m<n,i<j<k}J^{ijk}\lr{C_mH_n-C_nH_m}\lr{\delta_{ij}^{mn}\p_k+\delta_{jk}^{mn}\p_i-\delta_{ik}^{mn}\p_j}\\
=&\sum_{i<j<k}J^{ijk}\lr{C_iH_j-C_jH_i}\p_k+J^{ijk}\lr{C_jH_k-C_kH_j}\p_i+J^{ijk}\lr{C_kH_i-C_iH_k}\p_j\\=&
-\sum_{k<j}J^{ijk}\lr{H_jC_k-C_jH_k}\p_i\\=&-J^{ijk}H_jC_k\p_i\\=&\iota_{dH}\iota_{dC}J.
}

\section{Generalized Hamiltonian Mechanics}
\label{sec:GeneralizedHamiltonianMechanics}

The aim of this section is to define the framework of generalized Hamiltonian mechanics.
In the following, we shall assume smoothness of the involved quantities.
Given a manifold $M$, a vector field $X\in \mc{X}(M)$, and coordinates $\bol{x}=\lr{x^1,...,x^n}$ in a neighborhood $U$ of a point $p\in M$, we consider a dynamical system governed by the equations of motion
\eq{
    \frac{dx^i}{dt}=X^i,\quad i=1,\dots, n\label{EoM}
}
where $t$ denotes the time variable.

\subsection{Definition of some generalized Hamiltonian systems}

The dynamical system \eqref{EoM} qualifies as a generalized Hamiltonian system according to the following definition:

\begin{mydef} \label{def:generalisedHamiltonian}
    Let $\M$ be a smooth manifold of dimension $n\geq k\geq 2$. A vector field $X\in\mc{X}(\M)$ is called a \emph{generalized Hamiltonian vector field of degree $k$} whenever there exists a closed differential form $w \in \Omega^k(M)$ and $k-1$ almost everywhere linearly independent
    exact $1$-forms $dH^i$, $i=1,...,k-1$, such that
    \begin{equation}
        \iota_Xw=-dH^1\w ...\w dH^{k-1}.
        \label{ghmk}
    \end{equation}
    We call the $H^i$ the Hamiltonian functions.
\end{mydef}

Equation \eqref{ghmk} is referred to as the `Hamilton-de Donder-Weyl' (HDW) equation by some authors \cite{Ryvkin}.
Recalling Cartan's homotopy formula for the Lie derivative $\mc{L}_X$ and using the closure of $w$,  equation \eqref{ghmk} implies
\begin{equation}
    \mc{L}_Xw= d\iota_X w + \iota_X dw = 0.\label{Lw}
\end{equation}
Equation \eqref{Lw} represents a \textit{phase space conservation law}, indicating that it conserves  the geometric structure of $M$ independently of the specific form of the Hamiltonian functions $H^i$, $i=1,...,k-1$, which in turn encode the physical properties of matter.

A $2$-form is non-degenerate if, when considered as an antisymmetric matrix at any point $p\in M$, it is invertible. When $w$ is a non-degenerate $2$-form , $w$ is a symplectic form and the classical Hamiltonian structure is recovered.
Similar to the classical Hamiltonian structure, in the generalised case, the Hamiltonians $H^i$ usually correspond to the energy of the system and additional conservation laws that originate, for example, from symmetries. See Section \ref{sec:examples} for some examples.
The conservation of the Hamiltonians can be deduced from \eqref{ghmk} and antisymmetry
by contracting the $k$-form $w$ with $X$ twice:
\begin{equation}
    0=\iota_X^{2}w=-\lr{\iota_XdH^1}dH^2\w ...\w dH^{k-1}-...-\lr{-1}^{k-2}\lr{\iota_XdH^{k-1}}dH^1\w ...\w dH^{k-2}.
\end{equation}
Since by definition the $1$-forms $dH^i$, $i=1,...,k-1$, are linearly independent almost everywhere, then $dH^1\w ...\w dH^{k-1}\neq 0$ almost everywhere. By the assumed smoothness of $X$ and the $H^i$, it follows that $\iota_XdH^i=0$ for all $i=1,...,k-1$.

More generally, if a vector field $X$ a priori only preserves the closed $k$-form $w$ then
\[ \mc{L}_X w = 0 \iff d\iota_X w = 0.\]
It follows that $\iota_X w = -\sigma$ where $\sigma \in \Omega^{k-1}(M)$ is closed. In general, there may not exist Hamiltonian functions $H^i,\, i=1,\dots, k-1$ such that $\sigma = dH^1\w\dots\w dH^{k-1}$. Hence, although $X$ preserves $w$, it may not be a generalised Hamiltonian vector field. This motivates the following definition.

\begin{mydef}
    Let $\M$ be a smooth manifold of dimension $n\geq k\geq 2$ and $w$ a closed $k$-form on $\M$. A vector field $X\in\mc{X}(\M)$ is called a
    \emph{HDW (Hamilton-de Donder-Weyl)
        vector field} whenever there exist a closed $(k-1)$-form $\sigma$ on $\M$ such that
    \begin{equation}
        \iota_X w=-\sigma.
        \label{ghmk_2}
    \end{equation}
    The $k-1$ form $\sigma$ is the associated \emph{Hamiltonian $k-1$-form}.
\end{mydef}

Note that $\iota_X\sigma=-\iota_X^2w=0$ and $d\sigma=0$ imply
\begin{equation}
    \mc{L}_X\sigma=0
\end{equation}
Physically, the conservation of $\sigma$ may be regarded as a {matter conservation law}, in contrast with the conservation of $w$. Equation \eqref{ghmk_2} thus relates the properties of space $w$ to the properties of matter $\sigma$ through the vector field $X$.
These observations suggest that an even more general class of dynamical systems could be introduced, which does not require the forms \( w \) and \( \sigma \) to be closed. Such systems are governed by the equations
\begin{equation}
    \mathcal{L}_X w = 0, \quad \mathcal{L}_X \sigma = 0. \label{ghmk_3}
\end{equation}
In this framework, the phase space \( k \)-form \( w \) and the Hamiltonian \( k-1 \)-form \( \sigma \) are not necessarily closed differential forms.
In this more general class of dynamical systems, space and matter are not so obviously related as there is no immediate relation $\iota_X w = - \sigma$. In the following, we shall be concerned only with generalized Hamiltonian systems as defined in Definition \ref{def:generalisedHamiltonian}.

\subsection{Invertibility and Poisson tensors}

When $w$ is a non-degenerate $2$-form, equation \eqref{ghmk} uniquely determines the vector field $X$ for a given Hamiltonian function $H^1$.
However, when $k>2$,
a dimensional obstruction arises to the solvability of
equation \eqref{ghmk} for $X$. The obstruction depends upon the $k$-form $w$ and the Hamiltonians $H^1,...,H^{k-1}$.
Indeed, consider the hat-map
\[\hat{w}:\mc{X}\lr{\M}\rightarrow\Omega^{k-1}\lr{\M},\quad \hat{w}(X) = \iota_X w,\]
which sends vector fields into $k-1$-forms. The map $\hat{w}$ is linear and can be surjective at a point $p\in \M$ only if ${\rm dim}\lr{\mc{X}\lr{M}}\geq {\rm dim}\lr{\Omega^{k-1}\lr{M}}$. Explicitly, the dimensions of the spaces give the necessary condition for surjectivity of $\hat{w}$ as,
\begin{equation}
    n\geq \binom{n}{k-1}.\label{sur}
\end{equation}
When \eqref{sur} is violated, there exist $k-1$ forms $\sigma\in\Omega^{k-1}\lr{\M}$ with no generating vector field $X$ such that $\iota_Xw=-\sigma$.
For example, for $k=3$, eq. \eqref{sur} gives the condition $n\leq 3$, while $k=2$ returns $n\geq n$. This behavior is the first nontrivial property of equation \eqref{ghmk} that separates classical
Hamiltonian mechanics ($k=2$) from  generalized Hamiltonian systems  ($k>2$).

If ${\rm ker}\lr{\hat{w}}:=\lrc{Y\in \mc{X}\lr{\M}:\iota_Yw=0}\neq\emptyset$ then equation \eqref{ghmk} does not determine $X$ uniquely. Indeed, $X+Y$ also satisfies eq. \eqref{ghmk} for any $Y\in{\rm ker}\lr{\hat{w}}$.
Thus, the phase space conservation law and the conservation of the $k-1$ Hamiltonians do not provide enough information to determine the equations of motion $X$. It is here that the notions of invertibility of differential forms and Poisson $k$-tensor  come into play.
In this study we will be concerned with the following notions of invertibility for $k$-forms and $k$-vectors:
\begin{mydef}
    Let $w\in\Omega^k\lr{M}$ and $J\in\mc{X}^k\lr{M}$. We say that $J$ is a (strong) inverse of $w$ (and vice versa) if
    \eq{
        \iota_{\iota_Xw}J=X,\qquad\forall X\in \mc{X}\lr{\M}.\label{sinv}
    }
\end{mydef}

\begin{mydef}
    Let $w\in\Omega^k\lr{M}$ and $J\in\mc{X}^k\lr{M}$.
    Consider a smooth $m$-dimensional distribution $\Delta$ on $M$.
    We say that $J$ is a $\Delta$-inverse of $w$ (and vice versa) if
    \eq{
        \iota_{\iota_X w}J=X~~~~\forall X\in \mc{X}_\Delta(M), \label{winv}
    }
    where $\mc{X}_\Delta(M)$ is the space of all smooth sections of $\Delta$.
\end{mydef}
When $k=2$, in local coordinates, the (strong) invertibility \eqref{sinv} of the symplectic $2$-form $\omega=\sum_{i<j}\omega_{ij}dx^{ij}$ corresponds to the invertibility of the covariant $2$-tensor $\omega_{ij}$.
The inverse $\mc{J}=\sum_{i<j}\mc{J}^{ij}\p_{ij}\in\mc{X}^2\lr{\M}$ such that $\omega_{ij}\mc{J}^{jk}=\delta_j^k$ is the Poisson $2$-tensor.
Furthermore, the  closure of $\omega$
translates into the Jacobi identity satisfied by $\mc{J}$. The relationship between the properties of $\omega$ and those of $\mc{J}$ are discussed in detail in \cite{Sato21}.
When $\omega$ is invertible, $X=-\iota_{dH}\mc{J}=\mc{J}^{ij}H_j\p_i$ uniquely, with $H=H^1$.

More generally, the condition for invertibility \eqref{sinv} for a $k$-form $w$ is stated in a given chart as
\begin{equation}
    \sum_{i_1<...<i_{k-1}}w_{ai_1...i_{k-1}}J^{i_1...i_{k-1}b}=
    \delta_a^b,
\end{equation}
where $J\in\mc{X}^k\lr{\M}$ is a $k$-vector with components $J^{i_1...i_k}$ of the corresponding $k$-times contravariant antisymmetric tensor. A necessary condition for the inverse \eqref{sinv} to exist is that the tensor $w_{i_1...i_k}$ has maximum rank $n$. This necessary condition is equivalent to the hat-map $\hat{w}$ being injective. An alternative computation of the rank is given in \cite{Sato24}. There, the rank of $w_{i_1...i_k}$ is locally computed as the number of linearly independent columns in the $n\times n^{k-1}$ matrix  $w_{i_1\lr{i_2...i_{k}}}$, where $i_1$ and  $\lr{i_2...i_k}$ identify rows and columns respectively. Indeed, when $w$ has rank $n$,
the matrix $w_{i_1\lr{i_2...i_{k}}}$ admits a right-inverse $J^{\lr{i_1...i_{k-1}}i_k}$ of dimension $n^{k-1}\times n$, although this tensor is not necessarily antisymmetric.

If an inverse $J$ of $w$ exists, then it can be used to compute the solution to $\iota_X w = -\sigma$. Using local coordinates and setting
\[ \sigma=dH^1\w...\w dH^{k-1}=\sum_{i_1<...<i_{k-1}}\sigma_{i_1...i_{k-1}}dx^{i_1...i_{k-1}},\]
the equation $\iota_X w = -\sigma$ becomes
\begin{equation}
    X^{j}w_{ji_1...i_{k-1}}
    =-\sigma_{i_1...i_{k-1}}
    ,~~~~i_1,...,i_{k-1}=1,...,n.\label{ghmk2}
\end{equation}
Suppose that, given $w$ and $H^1,...,H^{k-1}$, a solution $X$ of equation \eqref{ghmk2} exists.
If such $J$ exists, and for a given $\sigma, w$ there is a solution $X$ to eq.~\eqref{ghmk2}, then $X$ is given locally as
\begin{equation}
    X^j=-\sum_{i_1<...<i_{k-1}}J^{i_1...i_{k-1}j}\sigma_{i_1...i_{k-1}},~~~~j=1,...,n.
\end{equation}

As it will be clear from Section \ref{sec:classicalCorrespondance}, the notion of invertibility \eqref{sinv} is strong. I will be seen that there are cases of $k$-forms $w$ which do not admit an inverse, yet, a $k$-vector $J$ and $(k-1)$-form $\sigma = dH^1 \w\dots \w dH^{k-1}$ exist on $M$ such that there is a solution $X$ of $\iota_X w = -\sigma$ and the solution can be written as $X=-\iota_{\sigma}J$.
In particular, since the vector field $X$ solving $\iota_X w = -\sigma$ preserves all Hamiltonian functions $H^1,...,H^{k-1}$, it is sufficient to consider the weaker notion of $\Delta$-invertibility of the $k$-form $w$ with the distribution $\Delta$ defined at a point $p\in M$ by
\[ \Delta_p=\lrc{Y\in T_p\M\, :\, \iota_Y (dH^i|_p)=0,\, i=2,...,k-1}.\]
That is, any smooth section of $\Delta$, say $Y \in \mc{X}_\Delta$, is a vector field tangent to the $n-k+2$ dimensional submanifolds
\[\Sigma_h=\lrc{p\in{\M}\, :\, H^2\lr{p}=h^2\in\mathbb{R},\dots,H^{k-1}\lr{p}=h^{k-1}\in\mathbb{R}}.\]

In this study, we adopt the following definition of Poisson $k$-tensor $J$, leaving the discussion of its relationship with the inverse of the symplectic form $w$ to Sections 3 and 5.

\begin{mydef} \label{def:PoissonkTensor}
    A multivector field $J\in\mc{X}^k\lr{\M}$ is called a Poisson tensor of degree $k$ (or simply Poisson $k$-tensor)  whenever there exist $k-2$ linearly independent exact
    $1$-forms
    $dC^1,...,dC^{k-2}$
    such that the bivector $\mc{J}\in\mc{X}^2\lr{\M}$ defined by
    \eq{
        \mc{J}=\iota_{dC^1}....\iota_{dC^{k-2}}J,
    }
    is a Poisson $2$-tensor in the standard sense.
\end{mydef}

For any Poisson $k$-tensor, the functions $C^1,...,C^{k-2}$ are Casimir invariants of corresponding Poisson $2$-tensor $\mc{J}$.
Let $dH=dH^1$ and $dH^i=dC^{i-1}$, $i=2,...,k-1$. Then, the equations of motion associated with a Poisson $k$-tensor  $J$ take the form
\begin{equation}
    X=-\iota_{dH}\mc{J}=-\iota_{dH^1}...\iota_{dH^{k-1}}J.\label{gpt}
\end{equation}
When $k=2$, equation \eqref{gpt} locally gives $X=J^{ij}H_j\p_i$. When $k=3$, equation \eqref{gpt} gives $X=J^{ijk}H_jC_k\p_i$ with $H^1=H$ and $H^2=C$.
Due to antisymmetry of $J^{i_1...i_k}$, equation \eqref{gpt} preserves the Hamiltonians $H^1,...,H^{k-1}$.

\section{Correspondence between classical Hamiltonian mechanics and generalized Hamiltonian mechanics of degree $k$}
\label{sec:classicalCorrespondance}

In this section, we establish several results connecting classical Hamiltonian mechanics to generalized Hamiltonian mechanics of degree \( k \)
for a special class of closed $k$-forms $w$. We begin with the following statement that demonstrates a large class of generalized Hamiltonian systems that are derived from classical Hamiltonian systems endowed with multiple invariants.

\begin{theorem}\label{thm:wFromOmega}
    Let $\omega\in\Omega^2(\M)$, consider the functions $ H, C^1,\dots, C^{k-2}\in C^{\infty}(M)$, and define $dC := dC^1\w\dots\w dC^{k-2} \in \Omega^{k-2}(M)$. If there exists a vector field $X$ on $\M$ such that
    \begin{equation}
        d\omega\w dC=0,~~~~\lr{\iota_{X}\omega+dH}\w dC=0,~~~~\iota_XdC=0,
    \end{equation}
    then, the $k$-form
    \begin{equation}
        w=\omega\w dC,
    \end{equation}
    satisfies
    \begin{equation}
        \iota_X w=-dH\w dC,~~~~dw=0.
    \end{equation}
    That is, $X$ is a generalised Hamiltonian vector field of $w$ with Hamiltonians $H,C^1,\dots,C^{k-2}$.
\end{theorem}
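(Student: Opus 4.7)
\textbf{Proof plan for Theorem \ref{thm:wFromOmega}.} The statement asks us to verify two identities, $\iota_X w = -dH\wedge dC$ and $dw=0$, for the $k$-form $w = \omega \wedge dC$. Both follow directly from the graded Leibniz rules for the exterior derivative $d$ and the interior product $\iota_X$, once combined with the three hypotheses. Accordingly, my plan is a direct computation in two short steps; there is no substantive obstacle, only careful bookkeeping of signs.

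\textbf{Step 1: closure of $w$.} Apply $d$ to $w=\omega\w dC$. Since $d$ is a graded derivation of degree $+1$ and $\omega$ has degree $2$,
\eq{
    dw = d\omega\w dC + (-1)^{2}\,\omega\w d(dC) = d\omega\w dC + \omega\w d\lr{dC^1\w\dots\w dC^{k-2}}.
}
Each factor $dC^i$ is itself closed, and $d$ is a derivation, so $d(dC^1\w\dots\w dC^{k-2})=0$. The first hypothesis $d\omega\w dC=0$ then gives $dw=0$.

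\textbf{Step 2: the HDW equation.} Apply $\iota_X$ to $w=\omega\w dC$. Since $\iota_X$ is a graded derivation of degree $-1$ and $\deg\omega=2$,
\eq{
    \iota_X w = (\iota_X\omega)\w dC + (-1)^{2}\,\omega\w \iota_X(dC) = (\iota_X\omega)\w dC + \omega\w \iota_X(dC).
}
The third hypothesis kills the second summand, leaving $\iota_X w = (\iota_X\omega)\w dC$. The second hypothesis rewrites this as $\iota_X w = -dH\w dC$, as required. Combining with Step~1 and Definition~\ref{def:generalisedHamiltonian}, $X$ is a generalized Hamiltonian vector field of degree $k$ with Hamiltonians $H,C^1,\dots,C^{k-2}$, provided $dH\w dC^1\w\dots\w dC^{k-2}\neq 0$ almost everywhere (this independence being implicit in the setup, since otherwise the HDW right-hand side degenerates).

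\textbf{Where care is needed.} The only nontrivial verification is the sign conventions in the Leibniz rules for $d$ and $\iota_X$ on factors of even degree; both turn out to contribute a $+1$ here. Also, while the hypothesis $\iota_X dC=0$ does not by itself force $X(C^i)=0$ for each $i$ unless the $dC^i$ are linearly independent, one does not need this stronger form anywhere in the argument: only the combined statement $\iota_X(dC^1\w\dots\w dC^{k-2})=0$ enters Step~2. Hence the theorem is established as stated.
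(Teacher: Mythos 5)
Your proof is correct and follows essentially the same route as the paper's: both verify $dw=0$ and $\iota_Xw=-dH\w dC$ by applying the graded Leibniz rules for $d$ and $\iota_X$ to $w=\omega\w dC$ and invoking the three hypotheses. Your additional remarks on sign bookkeeping and on the fact that only $\iota_X(dC^1\w\dots\w dC^{k-2})=0$ (not the individual $\iota_XdC^i=0$) is needed are accurate but not essential.
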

\begin{proof}
    Observe that
    \[ \iota_X w = (\iota_X\omega)\w dC +\omega\w \iota_X dC = -dH\w dC, \]
    and
    \[ dw = d\omega\w dC = 0,\]
    as desired.
\end{proof}

Theorem \ref{thm:wFromOmega} implies that given a classical Hamiltonian system $\iota_X\omega=-dH$, $d\omega=0$,  and $k-2$ invariants $C^1,...,C^{k-2}$, there always exists a corresponding  generalized Hamiltonian system of degree $k$.
An analogous   result can be obtained in terms of $k$-vector fields.

\begin{proposition}\label{prop:PoissonkTensorFromPoisson2Tensor}
    Let $\mc{J}\in\mc{X}^2\lr{\M}$ denote a Poisson $2$-tensor with $k-2$ Casimir invariants  $C^1,...,C^{k-2}\in C^{\infty}\lr{\M}$, $dC^i\in{\rm ker}\lr{\mc{J}}$, $i=1,...,k-2$, such that the $1$-forms $dC^1,...,dC^{k-2}$ are linearly independent. Then,
    \eq{
        J=N\w\mc{J},
    }
    is a Poisson $k$-tensor  with $N=n_{k-2}\w...\w n_{1}\in\mc{X}^{k-2}\lr{\M}$, $n_i\in\mc{X}\lr{\M}$, $\iota_{n_i}dC^j=\delta_i^j$, $i,j=1,...,k-2$.
\end{proposition}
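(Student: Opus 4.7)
The plan is to verify Definition \ref{def:PoissonkTensor} by direct computation, showing that
\[
    \iota_{dC^1}\ldots\iota_{dC^{k-2}}(N\wedge\mc{J}) = \mc{J},
\]
so that the associated bivector is precisely the hypothesised Poisson $2$-tensor $\mc{J}$.

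The first step is to push all the contractions onto the $N$-factor. Using the graded-Leibniz (antiderivation) property of the interior product by a $1$-form on a wedge of multivector fields,
\[
    \iota_{\alpha}(A\wedge B) = (\iota_\alpha A)\wedge B + (-1)^{|A|}A\wedge (\iota_\alpha B),
\]
applied with $\alpha = dC^j$ and using the Casimir condition $\iota_{dC^j}\mc{J}=0$ for $j=1,\ldots,k-2$, the term involving the contraction of $\mc{J}$ vanishes at every stage. Iterating this through all $k-2$ contractions yields
\[
    \iota_{dC^1}\ldots\iota_{dC^{k-2}}(N\wedge\mc{J}) = \bigl(\iota_{dC^1}\ldots\iota_{dC^{k-2}} N\bigr)\wedge\mc{J}.
\]

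The second step is to show that $\iota_{dC^1}\ldots\iota_{dC^{k-2}} N = 1$. The chosen ordering $N = n_{k-2}\wedge n_{k-3}\wedge\ldots\wedge n_1$ together with the duality $\iota_{n_i}dC^j = \delta_i^j$ are arranged precisely so the outermost contraction $\iota_{dC^{k-2}}$ peels off the leading factor with coefficient $+1$: by the explicit expansion of $\iota_\alpha$ on a decomposable multivector, every cross-term $dC^{k-2}(n_i)$ for $i<k-2$ vanishes, leaving $\iota_{dC^{k-2}}N = n_{k-3}\wedge\ldots\wedge n_1$. A routine induction on $k$ then delivers the value $1$, with base case $k=2$ corresponding to $N$ being the empty wedge (the scalar $1$).

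Combining the two steps gives $\iota_{dC^1}\ldots\iota_{dC^{k-2}}J = \mc{J}$, which is Poisson by hypothesis, so $J = N\wedge\mc{J}$ satisfies Definition \ref{def:PoissonkTensor}. I expect the only delicate point to be the sign bookkeeping in the antiderivation identity: the reverse ordering of the $n_i$ inside $N$ (and the matching forward order of the $dC^j$ contractions dictated by Definition \ref{def:PoissonkTensor}) is exactly what is needed for every sign generated by the antiderivation rule either to multiply a vanishing pairing or to cancel, so that the scalar comes out $+1$ rather than $\pm1$.
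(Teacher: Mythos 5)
Your proposal is correct and follows essentially the same route as the paper: both verify Definition \ref{def:PoissonkTensor} by computing $\iota_{dC^1}\dots\iota_{dC^{k-2}}(N\w\mc{J})=\mc{J}$, using the Casimir condition to kill the terms that contract $\mc{J}$ and the duality $\iota_{n_i}dC^j=\delta_i^j$ to reduce the $N$-factor to $1$. You simply spell out the Leibniz-rule and ordering details that the paper's one-line proof leaves implicit.
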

\begin{proof}
    Choose vector fields $n_1,...,n_{k-2}\in\mc{X}\lr{\M}$ 
    such that
    $\iota_{n_i}dC^j=\iota_{dC^j}n_i=\delta_i^j$, $i,j=1,...,k-2$. We have
    \eq{
        \iota_{dC^1}...\iota_{dC^{k-2}}J=\mc{J},
    }
    where we used the fact that the $C^i$, $i=1,...,k-2$,  are Casimir invariants of the Poisson $2$-tensor $\mc{J}$.
    Hence, from Definition \ref{def:PoissonkTensor} we see that $J$ is a Poisson $k$-tensor.
\end{proof}

Proposition \ref{prop:PoissonkTensorFromPoisson2Tensor} implies that given a classical Hamiltonian system with Poisson $2$-tensor $\mc{J}$, Hamiltonian $H$, and Casimir invariants $C^1,...,C^{k-2}$, there always exist a corresponding generalized Hamiltonian system of degree $k$ with Poisson $k$-tensor  $J=N\w\mc{J}$ and $k-1$ Hamiltonian functions  $H,C^1,...,C^{k-2}$ such that
the equations of motion take the form
$X=-\iota_{dH}\mc{J}=-\iota_{dH}\iota_{dC^1}...\iota_{dC^{k-2}}J$.

Remarkably,  the results above
admit converse statements;
given a generalized Hamiltonian system of degree $k$ it is possible to recover a classical Hamiltonian system under suitable assumptions on $w$ or $J$.
To see this, we introduce the following notation. Let $C^1,\dots, C^m \in C^{\infty}(\M)$ and define $dC := dC^1\w\dots\w dC^m$. Moreover, let
\[\operatorname{Reg}(C) := \{ p\in \M\,:\, dC|_p := (dC^1\w\dots\w dC^m)|_p \neq 0 \} \]
be the set of regular points of $dC$. Finally, denote the regular level sets by
\[\Sigma_c := \{ p\in\operatorname{Reg}(C)\,:\,C(p) = c\in\R^{k-2}\} \]
and set $i_c: \Sigma_c \hookrightarrow \M$ as the inclusion operator.
\begin{theorem}\label{thm:classicalFromGeneralised}
    Let $w\in\Omega^k\lr{\M}$ be a closed $k$-form
    and consider $C^1,\dots, C^{k-2} \in C^{\infty}(\M)$. Suppose that for every smooth function $H\in C^{\infty}(\M)$ there exists a vector field $X_{H}\in\mc{X}\lr{\M}$
    such that
    \eq{\iota_{X_H} w = - dH\w dC.}
    Then,
    there exists a 2-form $\omega\in \Omega^2\lr{\operatorname{Reg}(C)}$ such that
    \eq{w = \omega\w dC,\qquad d\omega\w dC = 0~~~~{{\rm in}~~\operatorname{Reg}\lr{C}}.}
    In particular, any regular level set $\Sigma_c$ of $C$ is a symplectic manifold with symplectic form $\tilde{\omega} := i_c^*\omega$ and
    \eq{\iota_{\tilde{X}_H}\tilde{\omega} = -d \tilde{H}, \label{oinv}}
    where $\tilde{X}_H$ is the restriction of $X_H$ to $\Sigma_c$ and $\tilde{H} := i_c^* H$.
\end{theorem}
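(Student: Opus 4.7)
The plan is to proceed in three steps: first establish the algebraic factorisation $w=\omega\w dC$ on $\operatorname{Reg}(C)$, then deduce $d\omega\w dC=0$ from the closure of $w$, and finally pull the structure back to each regular level set $\Sigma_c$.

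The heart of the argument is Step 1, which I would reduce to the stronger pointwise claim that $w\w dC^i=0$ on $\operatorname{Reg}(C)$ for each $i=1,\dots,k-2$. Since the $dC^i$ are pointwise linearly independent there, the vanishing of every $w\w dC^i$ forces every monomial of $w$ in adapted local coordinates to contain all of $dC^1,\dots,dC^{k-2}$ as wedge factors, yielding a smooth 2-form $\omega$ with $w=\omega\w dC$. To show $w\w dC^i|_p=0$ at $p\in\operatorname{Reg}(C)$, it suffices to check $\iota_v(w\w dC^i)|_p=0$ for every $v\in T_p\Sigma_c$: since $T_p\Sigma_c$ has codimension $k-2$ in $T_pM$ while $w\w dC^i$ has degree $k+1$, antisymmetry and multilinearity reduce the evaluation on any $(k+1)$-tuple to the value on $k+1$ vectors in a $(k-2)$-dimensional complement, which vanishes by linear dependence. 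To produce enough such $v$'s, fix $\eta\in T_p^*M$ linearly independent from $dC^1|_p,\dots,dC^{k-2}|_p$, pick $H$ with $dH|_p=\eta$, and set $v=(X_H)_p$. Applying $\iota_{X_H}$ to both sides of $\iota_{X_H}w=-dH\w dC$ yields the identity $(\iota_{X_H}dH)\,dC=dH\w\iota_{X_H}dC$; evaluated at $p$ and matched against the $(k-2)$-forms generated by $\{\eta,dC^1|_p,\dots,dC^{k-2}|_p\}$, this forces $\iota_{X_H}dH|_p=0$ and $\iota_{X_H}dC^j|_p=0$ for every $j$. Hence $v\in T_p\Sigma_c$ and $\iota_v(w\w dC^i)=(\iota_v w)\w dC^i=-\eta\w dC\w dC^i=0$. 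A dimension count completes the step: the map $\hat{w}|_{T_p\Sigma_c}$ has image containing the $m$-dimensional space $\{\eta\w dC|_p:\eta\in T_p^*M\}$ with $m=\dim T_p\Sigma_c$, so it is a bijection onto this space, and the $(X_H)_p$ span $T_p\Sigma_c$ as $\eta$ varies.

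Step 2 is immediate: since $d(dC)=0$, the Leibniz rule applied to $dw=0$ gives $d\omega\w dC=0$. For Step 3, closedness $d\tilde\omega=0$ follows because $d\omega\w dC=0$ forces $d\omega$ into the ideal generated by the $dC^j$'s, whose pullbacks $i_c^*dC^j$ vanish. Non-degeneracy of $\tilde\omega$ is equivalent to the bijectivity of $\hat{w}|_{T_p\Sigma_c}$ from Step 1, since $\iota_v w=(\iota_v\omega)\w dC$ for $v\in T_p\Sigma_c$. Finally, rewriting $\iota_{X_H}w=-dH\w dC$ as $(\iota_{X_H}\omega+dH)\w dC=0$ (using $\iota_{X_H}dC=0$) forces $\iota_{X_H}\omega+dH$ to be a combination of the $dC^j$'s, which is killed by $i_c^*$, yielding $\iota_{\tilde X_H}\tilde\omega=-d\tilde H$.

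The main obstacle is the pointwise analysis of Step 1: coordinating the conservation-law derivation $\iota_{X_H}dC^j=0$, the dimension count showing that Hamiltonian vector fields span $T_p\Sigma_c$, and the antisymmetry/pigeonhole reduction of $w\w dC^i$ to zero. Each ingredient is elementary in isolation, but their assembly is the technical crux.
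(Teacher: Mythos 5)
Your proof is correct, but it reaches the factorisation $w=\omega\w dC$ by a genuinely different route than the paper. The paper first \emph{constructs} $\omega=\iota_N w$ globally on $\operatorname{Reg}(C)$ from vector fields $n_i$ dual to the $dC^j$, derives $\iota_{X_H}\omega=-dH+(\iota_{n_i}dH)\,dC^i$ directly, and only then proves $w=\omega\w dC$ by showing that the discrepancy $\xi=\omega\w dC-w$ satisfies $\iota_{X_H}\xi=0$ for all $H$, killing $\xi$ through the non-degeneracy of $\tilde{\omega}$ (i.e.\ the full rank of its inverse $\tilde{\mc{J}}$) together with an antisymmetry argument on the components of $\xi$ in adapted coordinates. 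You instead prove the sharper pointwise statement $w\w dC^i=0$ via the codimension-plus-pigeonhole lemma (a $(k+1)$-form annihilated by a codimension-$(k-2)$ subspace vanishes), feeding it with the observation that the vectors $(X_H)_p$ lie in and span $T_p\Sigma_c$ --- your derivation of $\iota_{X_H}dH|_p=\iota_{X_H}dC^j|_p=0$ from $\iota_{X_H}^2w=0$ and the spanning/dimension count are both sound --- and then read off $\omega$ from the monomial structure in adapted coordinates. Your route is cleaner conceptually and makes the mechanism (Hamiltonian vector fields exhaust $T_p\Sigma_c$ and their contractions all contain $dC$) more transparent; its one cost is that your $\omega$ is a priori only chart-local, so to produce the global $\omega\in\Omega^2(\operatorname{Reg}(C))$ asserted in the statement you should either patch with a partition of unity (the local choices differ only by elements of the ideal generated by the $dC^j$, so $\tilde{\omega}$ is unambiguous) or simply adopt the paper's $\iota_N w$. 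One loose end you share with the paper: for an $H$ whose differential is everywhere dependent on the $dC^j$ (e.g.\ $H=C^1$), the identity $\iota_{X_H}dC^j=0$ used in your final step is not immediate; it is rescued a posteriori because, once $w=\omega\w dC$ with $\tilde{\omega}$ non-degenerate is known, $\hat{w}$ is injective on $\operatorname{Reg}(C)$ and the unique solution $X_H$ is forced into $T_p\Sigma_c$, but this deserves a sentence.
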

\begin{proof}
    Let $n_1,\dots, n_{k-2}$ be a set of vector fields on $\operatorname{Reg}(C)$ such that $\iota_{n_i} dC^j = \delta_{i}^{j}$. Define
    \eq{\omega = \iota_{n_{k-2}}\dots \iota_{n_1} w = \iota_{N} w}
    where $N = n_1\w\dots\w n_{k-2}$. Then, for any $X_H$ satisfying $\iota_{X_H} w = -dH\wedge dC$ we have
    \eq{ \iota_{X_H} \omega = \iota_{X_H}\iota_{N} w = (-1)^{k-2} \iota_{N} \iota_{X_H} w =  (-1)^{k-1}\iota_{N} (dH\w dC) = - dH + (\iota_{n_i}dH) dC^i.\label{XHomega}}
    Applying $i_c^*$ to both sides of Equation \eqref{XHomega} yields the desired Equation \eqref{oinv}, namely, $\iota_{\tilde{X}_H}\tilde{\omega} = -d\tilde{H}$. The restriction of $X_H$ to $\Sigma_c$ is well-defined as $\iota_{X_H} dC_i = 0$ for $i=1,\dots, k-2$.

    We now show that $w = \omega\w dC$ and $d\omega\w dC = 0$ on $\operatorname{Reg}(C)$. Fix $p\in \operatorname{Reg}(C)$ and let $H^1,\dots, H^{n-k+2}$ be functions on $\M$ such that $(dH^1\w \dots \w dH^{n-k+2}\w dC^1\w \dots\w dC^{k-2})|_{p} \neq 0$. Such functions can always be found in a local chart about $p$ {and extended smoothly to the entirety of $\M$}.
    In particular, we may take local coordinates  $\lr{x^1,...,x^n}=\lr{H^1,...,H^{n-k+2},C^1,...,C^{k-2}}$. Recalling \eqref{XHomega}, we see that
    \eq{
        \iota_{X_H}\lr{\omega\w dC-w}=0\iff \omega\w dC-w=\xi,
    }
    where $\xi\in\Omega^k\lr{\operatorname{Reg}(C)}$ is a $k$-form such that $\iota_{X_H}\xi=0$.
    Due to the arbitrariness of $H$, equation \eqref{oinv}
    implies that $\tilde{\omega}$ is non-degenerate and thus invertible on $\Sigma_c$ with inverse $\tilde{\mc{J}}\in\mc{X}^2\lr{\Sigma_c}$ (this also implies that $n-k$ must be even). Hence, the equation $\iota_{X_H}\xi=0$ can be written in components as
    \eq{
    X_H^i\xi_{ij_1...j_{k-1}}=-\tilde{H}_m\tilde{\mc{J}}^{mi}\xi_{ij_1...j_{k-1}}=0,~~~~j_1,...,j_{k-1}=1,...,n.
    }
    However, the invertibility of $\tilde{\omega}$ on $\Sigma_c$ implies that
    $\tilde{\mc{J}}$ has full rank on $\Sigma_c$.
    Combined with the arbitrariness of $H$, it follows that
    $\xi_{ij_1,...,j_{k-1}}=0$ for all $i=1,...,n-k+2$ and $j_1,...,j_{k-1}=1,...,n$. Now suppose that $\xi_{ij_1,...,j_{k-1}}\neq 0$ for some $i$ between $n-k+3$ and $n$.
    Since $\xi$ is a $k$-form, at least $1$ of the indexes, say $j_1$,  belongs to $1,...,n-k+2$. On the other hand, antisymmetry implies that $\xi_{ij_1...j_{k-1}}=-\xi_{j_1ij_2....j_{k-1}}=0$.

    We have thus shown that
    \eq{
    w=\omega\w dC~~~~{\rm in}~~\operatorname{Reg}(C).
    }
    This also implies that $d\omega \w d C = dw = 0$ as desired. We now show that the non-degenerate $2$-form $\tilde{\omega} = i_c^*\omega$ is a symplectic form on each $\Sigma_c$. 
    First, from the fact that $d\omega\wedge dC = 0$ we have that
    \[ 0 = \iota_{N}(d\omega\w dC) = (-1)^{k-2} d\omega + \sum_{k}\alpha_k\w dC^k \]
    where $\alpha_k$ are 2-forms. Then $d\tilde{\omega} = d i_c^*\omega = i_c^*  d\omega = 0$ so that $\tilde{\omega}$ is closed.
    This concludes the proof.
\end{proof}

The following is a corollary of Theorem \ref{thm:classicalFromGeneralised}.

\begin{corollary}
    Consider the hypothesis of Theorem \ref{thm:classicalFromGeneralised}. Then, there exists a $k$-vector  $J\in\mc{X}^k\lr{\operatorname{Reg}(C)}$ such that
    \eq{
        X_H= -\iota_{dH \w dC }J=-\iota_{dH}\mc{J},
    }
    where $\mc{J}\in\mc{X}^2\lr{\operatorname{Reg}(C)}$. Furthermore, the restriction  $\tilde{\mc{J}}\in\mc{X}^2\lr{\Sigma_c}$ of $\mc{J}$ to $\mc{X}^2\lr{\Sigma_c}$
    is a Poisson $2$-tensor, and, up to sign, $J$ is a $\Delta$-inverse of $w$ with $\Delta_p =\lrc{Y\in T_p\lr{\operatorname{Reg}(C)}\,:\,\iota_Y (dC|_p)=0}$.
\end{corollary}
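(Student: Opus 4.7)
The plan is to leverage the decomposition $w = \omega\w dC$ and the fibrewise symplectic form $\tilde\omega = i_c^*\omega$ provided by Theorem \ref{thm:classicalFromGeneralised}, and to assemble $\mc{J}$ and $J$ block-by-block in coordinates adapted to the foliation by level sets of $C$. Concretely, I would fix local vector fields $n_1,\dots,n_{k-2}$ on $\operatorname{Reg}(C)$ satisfying $\iota_{n_i}dC^j = \delta^j_i$ together with adapted coordinates $\lr{y^1,\dots,y^{n-k+2},C^1,\dots,C^{k-2}}$ in which $n_i = \p_{C^i}$.

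I would then define
\[ \mc{J} = \tfrac{1}{2}\mc{J}^{\alpha\beta}\p_\alpha\w\p_\beta, \qquad J = n_1\w\dots\w n_{k-2}\w \mc{J}, \]
where $\mc{J}^{\alpha\beta}$ is the matrix inverse of the $y$-block $\omega_{\alpha\beta}$ of $\omega$, which is invertible because its pullback to any $\Sigma_c$ is the non-degenerate symplectic form $\tilde\omega$. By construction $\iota_{dC^i}\mc{J}=0$, so the restriction $\tilde{\mc{J}} = \mc{J}\vert_{\Sigma_c}$ coincides with the symplectic inverse of $\tilde\omega$ and is therefore a Poisson $2$-tensor on $\Sigma_c$. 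Reading the identity $\iota_{X_H}\omega = -dH + (\iota_{n_i}dH)dC^i$ from the proof of Theorem \ref{thm:classicalFromGeneralised} in $y$-components, together with $\iota_{X_H}dC^i = 0$, yields $X_H^\beta\omega_{\beta\alpha} = -H_\alpha$, which inverts to $X_H^\alpha = \mc{J}^{\alpha\beta}H_\beta$; this is precisely $X_H = -\iota_{dH}\mc{J}$.

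The identity $X_H = -\iota_{dH\w dC}J$ then follows from a direct contraction: because $N := n_1\w\dots\w n_{k-2}$ is built from $\p_{C^i}$-legs while $\mc{J}$ has none, iterated application of $\iota_{dC^j}$ to $N\w\mc{J}$ consumes the legs of $N$ (each supplying a $\delta^j_i$) and leaves $\pm\mc{J}$; combined with $\iota_{dH}\mc{J} = -X_H$ from above, this yields $\iota_{dH\w dC}J = \pm X_H$, with the sign absorbed by the ordering choice in $N$. For the $\Delta$-inverse property, take $X\in\mc{X}_\Delta$: since $\iota_X dC = 0$, $\iota_X w = (\iota_X\omega)\w dC$, and only the $dy^\alpha$-part $X^\beta\omega_{\beta\alpha}dy^\alpha$ of $\iota_X\omega$ survives the wedge with $dC$; contracting into $J = N\w\mc{J}$ the $\iota_{dC^j}$ factors again eliminate $N$, while $\iota_{dy^\alpha}\mc{J} = \mc{J}^{\alpha\gamma}\p_\gamma$ together with $\mc{J}^{\alpha\gamma}\omega_{\beta\alpha} = -\delta^\gamma_\beta$ recovers $\pm X^\gamma\p_\gamma = \pm X$.

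The principal technical obstacle I anticipate is sign bookkeeping: the convention $\iota_{\alpha^1\w\dots\w\alpha^m} = \iota_{\alpha^m}\dots\iota_{\alpha^1}$ on multivectors combined with the anticommutativity of interior products of $1$-forms on multivectors produces cumulative factors of $(-1)^{k-2}$ and $(-1)^{(k-2)(k-3)/2}$ that must be tracked carefully. These signs are ultimately what the ``up to sign'' clause in the $\Delta$-inverse statement accommodates, and a judicious choice of ordering of the $n_i$ in $N$ can be used to produce the clean minus signs in $X_H = -\iota_{dH\w dC}J = -\iota_{dH}\mc{J}$.
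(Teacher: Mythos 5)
Your proposal is correct and follows essentially the same route as the paper: both construct $\mc{J}$ as the (smoothly varying) inverse of $\omega$ along the level sets of $C$ in adapted coordinates, set $J = N\w\mc{J}$ with $N = n_1\w\dots\w n_{k-2}$ up to a sign, and verify the contraction identities $\iota_{dC}J = \pm\mc{J}$, $X_H = -\iota_{dH}\mc{J}$, and the $\Delta$-inverse property by the same block computations. The only cosmetic difference is that you invert the $y$-block of $\omega$ directly (getting smoothness for free from matrix inversion), whereas the paper inverts $\tilde{\omega}_c$ leafwise and appeals to smooth dependence on $c$; these are the same object in your adapted chart.
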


\begin{proof}
    By Theorem \ref{thm:classicalFromGeneralised}, the $2$-forms $\tilde{\omega}_c :=i^{\ast}_c\omega$ are closed and have full rank on the corresponding regular level set $\Sigma_c$. Consequently, each $\tilde{\omega}_c$ admits an inverse on the corresponding $\Sigma_c$, that is, a Poisson $2$-tensor $\tilde{\mc{J}}_c\in\mc{X}^2\lr{\Sigma_c}$ such that $\iota_{\iota_Y \tilde{\omega}_c}\tilde{\mc{J}}_c = Y$ for any $Y \in \mc{X}(\Sigma_c)$.

    Due to the smoothness of $\omega$ in $\operatorname{Reg}(C)$, the Poisson $2$-tensors $\tilde{\mc{J}}_c$ are smooth in $c$ and so there exists a $2$-tensor $\mc{J}\in \mc{X}^2\lr{\operatorname{Reg}(C)}$ such that, for any $\Sigma_c$, we have $ (i_c)_* \tilde{\mc{J}}|_p = \mc{J}|_p $ for all $p \in \Sigma_c$. In particular, there are local coordinates $\lr{x^1,...,x^n}=\lr{x^1,...,x^{2m},C^1,...,C^{k-2}}$ so that $\mc{J}=\sum_{i<j}^{2m}\tilde{\mc{J}}^{ij}\p_i\w\p_j$. It follows that, for any
    $Y \in \mc{X}(\operatorname{Reg}(C))$ everywhere tangent to $\Sigma_c$, we have
    \eq{ \iota_{\iota_Y \omega} \mc{J} = Y^i \omega_{ij} \mc{J}^{jm} \partial_m = Y^i \tilde{\omega}_{ij} \tilde{\mc{J}}^{jm} = Y^i \partial_i.\label{eq:mcJisDeltaInverse}}
    Hence, $\mc{J}$ is a $\Delta$-inverse of $\omega$.



    Now consider $J\in\mc{X}^{k}\lr{\operatorname{Reg}(C)}$ with
    \eq{
    J= (-1)^{k-2} N\w\mc{J},~~~~N=n_{1}\w...\w n_{k-2}.
    }
    Note that $\iota_{dC}J= (-1)^{k-2}\mc{J}$. Now, for any $Y \in \mc{X}_{\Delta}(\operatorname{Reg}(C))$ it holds from Theorem \ref{thm:classicalFromGeneralised} that $ \iota_Y w = \iota_Y(\omega\w dC) = (\iota_Y \omega)\w dC$. Then, using \eqref{eq:mcJisDeltaInverse},
    \eq{ \iota_{\iota_Y w} J = \iota_{(\iota_Y \omega)\w dC} J = \iota_{(\iota_Y \omega)} \iota_{dC} J = \iota_{(\iota_Y \omega)} \mc{J} = Y.}
    That is, $J$ is a $\Delta$-inverse of $w$.

    Finally, if $X_H$ solves $\iota_{X_H} w = -dH\w dC$, then $X_H \in \mc{X}_\Delta(\operatorname{Reg}(C))$, thus,
    \eq{X_H = \iota_{\iota_{X_H}w}J = -\iota_{dH\w dC} J = - \iota_{dH} \mc{J}. }

\end{proof}

\section{Form flatness,
  Lie-Darboux theorems, and Liouville measures}
  \label{sec:flatnessDarbouxEtc}

In this section we explore how the cornerstone theorems of classical Hamiltonian mechanics are modified in a generalized Hamiltonian theory of degree $k$.
First, we show the following Lie-Darboux type theorem for closed $3$-forms associated to Hamiltonian systems with two independent conserved quantities.

\begin{mydef}
    Let $\omega\in \Omega^2\lr{\M}$ denote a (not necessarily closed) $2$-form and $dC\in \Omega\lr{\M}$ an exact $1$-form on $\M$. Then $\omega$ is said to be of \emph{constant rank with respect to $dC$ in a set $U$} if the $2$-form
    \[ \omega_{C(\bol{x})}|_{\bol{x}} := i_{C(\bol{x})}^* (\omega |_{\bol{x}}) \]
    has the same rank for all $\bol{x} \in U$.
\end{mydef}

\begin{theorem}\label{thm:DarbouxCoords}
    Let $w$ be a closed $3$-form on $\M$ that decomposes as $w = \omega \w dC$ with $\omega\in \Omega^2\lr{\M}$ and $dC\in \Omega\lr{\M}$.
    Then, for every point $\bol{x}\in M$ such that $\omega$ is of constant rank with respect to $dC$ in a neighborhood of $\bol{x}$, there
    exist a neighborhood $U$ of $\bol{x}$ and a coordinate system $\lr{p^1,...,p^\ell,q^1,...,q^\ell,G^1,...,G^\tau}$ with $n=2\ell+\tau$ such that
    \begin{equation}
        w=\omega_0\w dC,~~~~\omega_0=\sum_{i=1}^\ell dp^i\w dq^i~~~~{\rm in}~~U,
    \end{equation}
    where $2\ell$ is the rank of $\tilde{\omega}_{C(\bol{x})} := i_{C(\bol{x})}^* \omega$ at $\bol{x}$.
\end{theorem}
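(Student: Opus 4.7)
The plan is to regard $w$ as a $C$-parameterized family of closed $2$-forms of constant rank on the level sets of $C$, then apply a presymplectic Darboux theorem with smooth dependence on the parameter. The constant-rank-with-respect-to-$dC$ hypothesis presumes regular level sets, so $dC\neq 0$ in a neighborhood of $\bol{x}$, and $C$ may be completed to a coordinate system $(y^1,\dots,y^{n-1},C)$ near $\bol{x}$.

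Decompose $\omega = \omega' + \eta\w dC$ with $\omega'$ and $\eta$ containing no $dC$ factor. Since $\eta\w dC\w dC = 0$, we have $w = \omega'\w dC$. Splitting $d\omega' = d_y\omega' + dC\w\partial_C\omega'$, where $d_y$ is the exterior derivative along the $y$-slice, one finds
\[
dw = d\omega'\w dC = (d_y\omega')\w dC,
\]
which vanishes iff $d_y\omega' = 0$; equivalently, for each $c$ the restriction $\omega'_c := i_c^*\omega$ is a closed $2$-form on $\Sigma_c$. By the constant rank hypothesis each such $\omega'_c$ has rank $2\ell$, making it a closed presymplectic form on the $(n-1)$-dimensional slice.

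Next, the kernel distribution $K_c := \ker\omega'_c$ has rank $n-1-2\ell$ and is involutive: for sections $X,Y$ of $K_c$, the commutator identity $\iota_{[X,Y]}\omega'_c = \mc{L}_X\iota_Y\omega'_c - \iota_Y\mc{L}_X\omega'_c$, combined with $\iota_X\omega'_c = \iota_Y\omega'_c = 0$ and closure of $\omega'_c$, gives $\iota_{[X,Y]}\omega'_c = 0$. Applying Frobenius smoothly in $c$ yields functions $G^1(y,C),\dots,G^{n-1-2\ell}(y,C)$ integrating $K_c$ on each slice. On the transverse $2\ell$-dimensional quotient, $\omega'_c$ descends to a closed nondegenerate form, and a parametric Moser-trick argument produces smoothly-varying functions $p^i(y,C),q^i(y,C)$, $i=1,\dots,\ell$, for which $\omega'_c = \sum_{i=1}^\ell dp^i\w dq^i$ on each $\Sigma_c$.

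Relabel $C = G^\tau$ with $\tau=n-2\ell$ and take $(p^i,q^i,G^j)$ as new coordinates on $U$. The slice equality lifts to $\omega' = \sum d_y p^i \w d_y q^i$ on $U$, and wedging with $dC$ discards all $dC$ components of $dp^i,dq^i$, yielding
\[
w = \omega'\w dC = \left(\sum_{i=1}^\ell dp^i\w dq^i\right)\w dC,
\]
the desired normal form. The main obstacle is the smoothness of the Darboux coordinates in the parameter $c$: the constant rank hypothesis is precisely what is needed so that $K_c$ is a smooth distribution varying smoothly in $c$, Frobenius produces the $G^j$ smoothly in $c$, and Moser's homotopy straightening $\omega'_c$ to the standard symplectic model can be carried out uniformly over a $c$-neighborhood.
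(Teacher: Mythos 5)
Your proposal is correct and follows essentially the same route as the paper: decompose $\omega$ into a part $\omega'$ free of $dC$ plus a remainder killed by wedging with $dC$, use $dw=0$ to conclude that the pullbacks $i_c^*\omega$ are closed constant-rank $2$-forms on the level sets, apply a $c$-parametric constant-rank (Lie--Darboux) theorem, and then wedge the slice normal form with $dC$ to absorb the $dC$-components of $dp^i, dq^i$. The only difference is that you unpack the proof of the parametric presymplectic Darboux theorem (involutivity of the kernel, Frobenius, Moser) where the paper simply cites it, which is a legitimate and complete substitute.
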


\begin{proof}
    Let $\tilde{U}$ be the neighborhood on which $\omega$ is constant rank $2\ell$ with respect to $dC$. Choosing local coordinates $\lr{x^1,...,x^{n}}=\lr{x^1,...,x^{n-1},C}$ in $\tilde{U}$ of $\bol{x}$, we have
    \begin{equation}
        \omega=\sum_{i<j}\omega_{ij}dx^{ij}=\sum_{i=1}^{n-1}\omega_{in}dx^i\w dC+\sum_{i<j}^{n-1}\omega_{ij}dx^{ij}.\label{omega}
    \end{equation}
    Define ${\omega}'=\sum_{i<j}^{n-1}\omega_{ij}dx^{ij}$ and observe that the rank of $\omega'$ is constant in $\tilde{U}$ if and only if $\omega$ is of constant rank with respect to $dC$ in $\tilde{U}$. By assumption, it follows that $\omega'$ is of constant rank $2\ell$ in $\tilde{U}$.

    Evidently $w={\omega}'\w dC$ in $\tilde{U}$ and, if $\tilde{\omega}_c := i_c^* \omega$ and $\tilde{\omega}_c' := i_c^* \omega'$, then $\tilde{\omega}_c = \tilde{\omega}_c'$. Since $dw=d\omega'\w dC=0$, it follows that $d\tilde{\omega}'_c=i_c^*d\omega'=0$ in any local level set $\Sigma_c \cap \tilde{U}$. Then, by the generalised Lie-Darboux theorem \cite{DeLeon89}, there exists a neighborhood $U \subseteq \tilde{U}$ of $\bol{x}$ and $n-1$ local coordinates $\lr{p^1,...,p^\ell,q^1,...,q^\ell,G^1,...,G^{\tau-1}}$, $\tau = n - 2\ell$ such that
    \begin{equation}
        \tilde{\omega}_c = \tilde{\omega}'_c = \sum_{i=1}^{\ell} dp^i\w dq^i~~~~{\rm in}~~\Sigma_c \cap U.
    \end{equation}
    Moreover, these coordinates depend smoothly on $c$.
    The smooth dependence on $c$ allows the coordinates $p^i,q^i:C^{\infty}\lr{\Sigma_c\cap U}\rightarrow\mathbb{R}$ to also define smooth functions
    $p^i,q^i:C^{\infty}\lr{U}\rightarrow\mathbb{R}$.
    Indeed, they have the form $p^i\lr{x^1,...,x^{n-1},C}$ and
    $q^i\lr{x^1,...,x^{n-1},C}$, $i=1,...,\ell$.
    Finally, observe that
    \eq{
    \   \lr{\omega'-\sum_{i=1}^{\ell}dp^i\w dq^i} \w dC=0~~~~{\rm in}~~U.
    }
    Then,
    \begin{equation}
        w=
        \sum_{i=1}^{\ell}{d}p^i\w{d}q^i\w dC=\omega_0\w dC.
    \end{equation}
\end{proof}

\begin{corollary}\label{cor:localInvariantMeasure}
    Let $w$ be a closed $3$-form on $\M$ that decomposes as $w = \omega \w dC$ with $\omega\in \Omega^2\lr{\M}$ and $dC\in \Omega\lr{\M}$. Assume that $\bol{x} \in M$ is such that $\omega$ is of constant rank with respect to $dC$ in a neighborhood of $\bol{x}$, and let $(p^1,\dots,p^l,q^1,\dots,q^l,G^1\dots,G^\tau)$ be the Darboux coordinates guaranteed from Theorem \ref{thm:DarbouxCoords}. Then, given a $1$-form $dH\in \Omega\lr{\M}$, linearly independent from $dC$ in $U$, the local phase space measure $d\Pi=dp^1\w ...\w dp^{\ell}\w dq^1\w ...\w dq^{\ell}\w dG^1\w...\w dG^\tau$ is an invariant measure in $U$ for the generalized Hamiltonian system  $X\in \mc{X}\lr{U}$ such that
    \begin{equation}
        \iota_Xw=-dH\w dC,\label{sys}
    \end{equation}
    provided that such $X$ exists. In addition,
    \begin{equation}
        \iota_{\tilde{X}}\tilde{\omega}_0=-{d}\tilde{H}~~~~{\rm in}~~\Sigma_c,
    \end{equation}
    where $\tilde{\omega}_0=i_c^*\omega_0$,   $\tilde{H}=i_c^*H$, and $\tilde{X}$ denotes the restriction of $X$ to $\Sigma_c$.
\end{corollary}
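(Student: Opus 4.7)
My plan is to reduce the proposition to a direct coordinate computation in the Darboux chart provided by Theorem~\ref{thm:DarbouxCoords}. The construction in that theorem first adjoins $C$ as the final coordinate and then applies the classical Lie--Darboux theorem to the residual $2$-form on each leaf of $C$, so we may relabel $G^{\tau}=C$. In the resulting chart
\[w=\sum_{i=1}^{\ell}dp^{i}\wedge dq^{i}\wedge dC,\]
and $d\Pi$ is the associated Lebesgue-type top form.

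I would then expand $X=X^{p^{i}}\partial_{p^{i}}+X^{q^{i}}\partial_{q^{i}}+X^{G^{j}}\partial_{G^{j}}$ and contract into $w$ to obtain
\[\iota_{X}w=\sum_{i}\Bigl[X^{p^{i}}dq^{i}\wedge dC-X^{q^{i}}dp^{i}\wedge dC+X^{C}dp^{i}\wedge dq^{i}\Bigr],\]
equate to $-dH\wedge dC$, and match coefficients in the natural basis of $2$-forms. This yields
\[X^{q^{i}}=H_{p^{i}},\quad X^{p^{i}}=-H_{q^{i}},\quad X^{C}=0,\quad H_{G^{j}}=0\ \text{for } j<\tau,\]
and no constraint on $X^{G^{j}}$ for $j<\tau$. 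The vanishing of $H_{G^{j}}$ is the consistency condition for the existence of $X$ and implies that $H$ descends to a smooth function on $\Sigma_{c}$. Among admissible $X$, I would select the canonical representative $X^{G^{j}}=0$, which is the one supplied by the Poisson-tensor construction in the corollary following Theorem~\ref{thm:classicalFromGeneralised}.

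With this choice the divergence of $X$ relative to $d\Pi$ reads
\[\operatorname{div}X=\sum_{i}\bigl[\partial_{p^{i}}(-H_{q^{i}})+\partial_{q^{i}}(H_{p^{i}})\bigr]+0+0=0\]
by equality of mixed partials, so $\mathcal{L}_{X}d\Pi=(\operatorname{div}X)\,d\Pi=0$, which proves the first claim. For the second claim, because $X^{C}=0$ the vector field $X$ is tangent to every $\Sigma_{c}$ and its restriction $\tilde X$ is well defined; pulling back $\omega_{0}$ gives $\tilde\omega_{0}=\sum_{i}dp^{i}\wedge dq^{i}$, and a direct contraction, using $\tilde H_{G^{j}}=0$, yields
\[\iota_{\tilde X}\tilde\omega_{0}=-\sum_{i}(H_{p^{i}}dp^{i}+H_{q^{i}}dq^{i})=-d\tilde H.\]

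The main obstacle is conceptual rather than computational: the equation $\iota_{X}w=-dH\wedge dC$ does not determine $X$ uniquely because $\hat w$ has a $(\tau-1)$-dimensional kernel spanned by $\partial_{G^{1}},\dots,\partial_{G^{\tau-1}}$, and a generic element of the gauge orbit fails to preserve $d\Pi$. The corollary therefore has to be read as a statement about the canonical representative selected by the Poisson-tensor framework, and any careful writeup must either make this selection explicit or restrict to the divergence-free subclass of the gauge orbit.
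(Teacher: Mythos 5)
Your proof is correct and follows essentially the same route as the paper's: both work in the Darboux chart of Theorem \ref{thm:DarbouxCoords} and reduce measure invariance to the vanishing divergence of a canonical Hamiltonian vector field (the paper phrases this as the classical Liouville theorem on each leaf $\Sigma_c$ and then wedges with $dC$; you compute $\operatorname{div}X$ directly). Your closing observation is a genuine and worthwhile point rather than a defect of your argument: when $\tau>1$ the kernel of $\hat{w}$ contains $\partial_{G^1},\dots,\partial_{G^{\tau-1}}$, so $\iota_X w=-dH\w dC$ determines $X$ only up to sections of that kernel, and only the representative with vanishing $G^j$-components ($j<\tau$) preserves $d\Pi$. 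The paper's proof makes the same selection implicitly when it asserts that the restricted equation on $\Sigma_c$ ``defines a Hamiltonian system with invariant measure,'' so the corollary should indeed be read for that canonical representative; by contrast, the second claim $\iota_{\tilde{X}}\tilde{\omega}_0=-d\tilde{H}$ is insensitive to this gauge freedom, as you note.
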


\begin{proof}
    Let $X\in \mc{X}\lr{U}$ solve system \eqref{sys}. Recalling that, by hypothesis, $dH$ and $dC$ are linearly independent in $U$ and noting that $0=\iota_X\iota_Xw=-\lr{\iota_XdH} dC+\lr{\iota_XdC} dH$, it follows that $\iota_XdH=\iota_XdC=0$. On the other hand, $\iota_Xw=\iota_X\omega_0\w dC=-{d}{H}\w dC$, which implies
    \begin{equation}
        \iota_{\tilde{X}}\tilde{\omega}_0=-{d}\tilde{H}~~~~{\rm in}~~\Sigma_{c}\cap U.\label{sys2}
    \end{equation}
    Since ${d}\tilde{\omega}_0=0$, equation \eqref{sys2} defines a Hamiltonian system with invariant measure $\lr{\bigwedge_{i=1}^{\ell}{d}p^i\w {d}q^i}\w {d}G^1\w ...\w {d}G^{\tau-1}$, $\tau = n-2\ell$ in $\Sigma_c\cap U$.
    Set $\lr{G^1,...,G^{\tau}}=\lr{G^1,...,G^{\tau-1},C}$. It follows that
    \begin{equation}
        d\Pi=\lr{\bigwedge_{i=1}^{\ell}dp^i\w dq^i}\w dG^1\w...\w dG^{\tau},
    \end{equation}
    defines an invariant measure for $X$ in $U$.
\end{proof}

{Theorem \ref{thm:DarbouxCoords} shows that, under appropriate hypothesis, the $3$-form $w = \omega \w dC$ can be flattened near any point $\bol{x}$ with a neighbourhood that $\omega$ is of constant rank with respect to $dC$ in. This is the essence of the usual Lie-Darboux theorem for symplectic forms. More generally, we have the following definition of a locally flat point $\bol{x}\in \M$.
    \begin{mydef}
        A $k$-form $w$ on a manifold $\M$ is said to be \emph{locally flat near $\bol{x}\in M$} if there exists coordinates $(x^1,\dots,x^n)$ on a neighbourhood $U$ of $\bol{x}$ such that
        \[ w = \sum_{1\leq i_1<\dots<i_k\leq n}A_{i_1\dots i_k}dx^{i_1\dots i_k}  \]
        with $A_{i_1 \dots i_k} \in \R$ constant for all $ 1\leq i_1<\dots<i_k\leq n$.
    \end{mydef}
    In contrast with symplectic geometry, the invertibility of a $k$-form $w$ is not sufficient to guarantee the existence of a local flattening diffeomorphism \cite{Ryvkin,Sato21,Sato24}. The following proposition provides flatness conditions for a class of closed $k$-forms $w$
    by application of Moser's method. This result is then used to construct examples of $3$-form flattening. }

\begin{proposition} \label{prop:flattening}
    Let $w$ be a closed $k$-form on $\M$ and let $(y^1,\dots,y^n)$ be local coordinates in a neighborhood $U$ of a given point $\bol{x}\in\M$. Define $w_0 \in \Omega^k(U)$ and $Z_0 \in \mc{X}(U)$ by
    \eq{w_0 =\sum_{1\leq i_1<...<i_k\leq n}A_{i_1...i_k}dy^{i_1...i_k},\qquad Z_0 = \frac{1}{k} y^i \partial_i, }
    where all $A_{i_1...i_k}\in\mathbb{R}$. Further, suppose that there exists vector fields $X_t, Z \in \mc{X}\lr{\M}$ so that, with appropriate restriction,
    \begin{subequations}
        \begin{align}
             & \mc{L}_Zw_0=w,                                                            \\
             & \lr{\mc{L}_{tX_t}\mc{L}_Z-\mc{L}_{Z_0-Z-\lr{1-t}X_t}}w_0=0,\label{flatc}.
        \end{align}
    \end{subequations}
    If the flow $\Phi_t$ of $X_t$ is well-defined for all $(\bol{x}_0,t) \in U\times [0,1]$ then there exists
    coordinates $\lr{x^1,...,x^n}$ in $U$ such that
    \begin{equation}
        w = \sum_{1\leq i_1<...<i_k\leq n}A_{i_1...i_k}dx^{i_1...i_k} ~~~~{\rm in}~~U.
    \end{equation}
\end{proposition}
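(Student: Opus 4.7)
The plan is to recognize the hypothesis (\ref{flatc}) as the Moser equation for the straight-line homotopy $w_t := (1-t)w_0 + tw$ between $w_0$ and $w$, then run the standard Moser trick with the flow of $X_t$, and finally read off a flattening chart from that flow. The key auxiliary identity is that the Euler-type field $Z_0$ acts on $w_0$ as the identity: the flow of $Z_0 = \tfrac{1}{k} y^i \p_i$ is the dilation $y \mapsto e^{t/k} y$, under which each basis form $dy^{i_1 \dots i_k}$ scales by $e^t$, so differentiating at $t=0$ and using the constancy of the coefficients $A_{i_1\dots i_k}$ yields $\mc{L}_{Z_0} w_0 = w_0$.

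Combining $\mc{L}_{Z_0} w_0 = w_0$ with the hypothesis $\mc{L}_Z w_0 = w$ and using linearity of the Lie derivative in its vector-field argument, one computes
\begin{equation}
\lr{\mc{L}_{tX_t}\mc{L}_Z - \mc{L}_{Z_0 - Z - (1-t)X_t}} w_0 = \mc{L}_{X_t}\bigl[(1-t) w_0 + t w\bigr] + (w - w_0),
\end{equation}
so (\ref{flatc}) is precisely the Moser equation $\mc{L}_{X_t} w_t + \dot w_t = 0$. Let $\Phi_t$ denote the flow of the time-dependent field $X_t$; by hypothesis $\Phi_t$ is defined on $U \times [0,1]$, and shrinking $U$ about $\bol{x}$ if necessary $\Phi_t$ is a diffeomorphism of $U$ onto its image for every $t\in[0,1]$. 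Then
\begin{equation}
\frac{d}{dt}\Phi_t^* w_t = \Phi_t^*\bigl(\mc{L}_{X_t} w_t + \dot w_t\bigr) = 0,
\end{equation}
which integrates to $\Phi_1^* w = \Phi_1^* w_1 = \Phi_0^* w_0 = w_0$.

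To produce the claimed coordinates, define $x^i := y^i \circ \Phi_1^{-1}$ on $\Phi_1(U)$. Since $dx^i = (\Phi_1^{-1})^* dy^i$, we obtain
\begin{equation}
\sum_{1 \leq i_1 < \dots < i_k \leq n} A_{i_1 \dots i_k}\, dx^{i_1 \dots i_k} = (\Phi_1^{-1})^* w_0 = (\Phi_1^{-1})^*\Phi_1^* w = w
\end{equation}
on $\Phi_1(U)$. The only genuinely nontrivial step is the algebraic identity used to recast (\ref{flatc}) as the Moser equation; everything thereafter is routine Moser machinery, with the well-posedness of the integration guaranteed by the hypothesis on $\Phi_t$. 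The mildest technical point is ensuring $\Phi_1$ is a diffeomorphism onto a neighborhood of $\bol{x}$, handled by continuous dependence of the flow on initial conditions and a possible shrinking of $U$.
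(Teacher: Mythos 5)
Your proposal is correct and follows essentially the same route as the paper: both establish $\mc{L}_{Z_0}w_0 = w_0$, use it together with $\mc{L}_Z w_0 = w$ to identify hypothesis \eqref{flatc} with the Moser equation $\mc{L}_{X_t}w_t + \partial_t w_t = 0$ for $w_t = t w + (1-t)w_0$, and then integrate $\frac{d}{dt}\Phi_t^* w_t = 0$ to conclude $\Phi_1^* w = w_0$. Your explicit construction of the chart $x^i = y^i \circ \Phi_1^{-1}$ and the remark about shrinking $U$ are minor additions of care beyond what the paper writes, but the argument is the same.
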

\begin{proof}
    We apply Moser's argument to the family of closed $k$-forms $w_t=t w+\lr{1-t}w_0$. We want to construct a family of diffeomorphisms $\Phi_t$ so that $\Phi_t^* w_t = w_0$. If such a family exists, then in particular, $\Phi_1^* w_1 = \Phi_1^*w = w_0$, and $\Phi_1$ is hence the desired coordinate transformation.
    To achieve this, we show the desired transformations $\Phi_t$ is given by the flow of the non-autonomous vector field $X_t$.

    Observe that
    \[  \frac{d}{dt} \Phi_t^* w_t = \Phi_t^*\lr{\partial_t w_t + \mc{L}_{X_t} w_t } = \Phi_t^*\lr{w - w_0 + \mc{L}_{X_t} w_t }  \]
    Using the fact that $w_0=\mc{L}_{Z_0}w_0$ and that by assumption $\mc{L}_Z w_0 = w$, it follows that
    \[ \frac{d}{dt} \Phi_t^* w_t = \Phi_t^*\lr{ \lr{ \mc{L}_{t X_t} \mc{L}_Z - \mc{L}_{Z_0 - Z  - (1-t)X_t} } w_0} = 0,\ \]
    with the last equality holding by assumption. It follows that $\Phi_t^* w_t = \Phi_0^* w_0 = w_0$ whenever $\Phi_t$ is well-defined. By assumption $\Phi_t$ is well defined for $t\in[0,1]$ and all $\bol{x}_0\in U$ and the result follows.
\end{proof}

\begin{remark}
    If a coordinate change mapping a $k$-form $w$ into the flat form $w_0$ has to be found by Moser's method, it is necessary that $w=\mc{L}_Zw_0$ for some $Z\in \mc{X}\lr{\M}$. Indeed, evaluating $\p_tw_t+\mc{L}_{X_t}w_t$ at $t=0$, and recalling that $w_0=\mc{L}_{Z_0}w_0$, one obtains $Z=Z_0-X_0$.
\end{remark}

\begin{remark}
    Consider the setting of Proposition \ref{prop:flattening}.  Denote with $Y\in \mc{X}\lr{\M}$ any Lie symmetry of $w_0$ so that $\mc{L}_Yw_0=0$, and suppose that $\p X_t/\p t=0$. Then, equation \eqref{flatc} leads to the system of equations
    \begin{equation}
        \mc{L}_{X_t}\mc{L}_{Z-Z_0}w_0=0,~~~~\mc{L}_{Z-Z_0-X_t}w_0=0.
    \end{equation}
    Setting $X_t=X$ and $Z=Z_0-X+Y$, one arrives at the equation
    \begin{equation}
        \mc{L}_{X}^2w_0=0.\label{flatc2}
    \end{equation}
    Hence, a flattening coordinate change for the form $w=\mc{L}_{Z}w_0=\lr{1-\mc{L}_X}w_0$ can be constructed by looking at solutions $X$ of equation \eqref{flatc2} and the flow $\Phi_t$.
\end{remark}

\begin{example}
    Consider the $3$-form  $w_0=dx^{123}+dx^{124}$ in $\mathbb{R}^4$. We look for a solution of equation \eqref{flatc2} in the form $X=X^3\p_3+X^4\p_4$. We have
    \begin{equation}
        \begin{split}
            \mc{L}_X^2w_0&=d\iota_X\frac{\p\lr{X^3+X^4}}{\p x^i}dx^{12i}=d\lrc{\lrs{X^3\frac{\p\lr{X^3+X^4}}{\p x^3}+X^4\frac{\p\lr{X^3+X^4}}{\p x^4}}dx^{12}}\\
            &=\frac{\p}{\p x^3}\lrs{X^3\frac{\p\lr{X^3+X^4}}{\p x^3}+X^4\frac{\p\lr{X^3+X^4}}{\p x^4}}dx^{123}+\frac{\p}{\p x^4}\lrs{X^3\frac{\p\lr{X^3+X^4}}{\p x^3}+X^4\frac{\p\lr{X^3+X^4}}{\p x^4}}dx^{124}.
        \end{split}
    \end{equation}
    This quantity vanishes provided that
    \begin{equation}
        X^3\frac{\p\lr{X^3+X^4}}{\p x^3}+X^4\frac{\p\lr{X^3+X^4}}{\p x^4}=f\lr{x^1,x^2}.
    \end{equation}
    An explicit solution can be obtained, for example, by setting $X^3=X^4=\sqrt{\lr{x^3+x^4}f/2}$ for $\lr{x^3+x^4}f\geq 0$. In this case,
    \begin{equation}
        w=\mc{L}_{Z_0-X}w_0=w_0-dx^{12}\w d\sqrt{2\lr{x^3+x^4}f}.
    \end{equation}
    Let us determine Moser's coordinate change. For simplicity, we assume $f\geq 0$. Since $X^1=X^2=0$, we have $x^1=x^1_0$ as well as $x^2=x^2_0$. We must further solve
    \begin{equation}
        \frac{dx^i}{dt}=\sqrt{\frac{\lr{x^3+x^4}f}{2}},~~~~x^i\lr{0}=x^i_0,~~~~i=3,4.
    \end{equation}
    We find
    \begin{equation}
        x^3=\frac{x_0^3-x_0^4}{2}+\frac{1}{2}\lr{\sqrt{x^3_0+x^4_0}+\sqrt{\frac{f}{2}}t}^2,~~~~x^4=\frac{x_0^4-x_0^3}{2}+\frac{1}{2}\lr{\sqrt{x^3_0+x^4_0}+\sqrt{\frac{f}{2}}t}^2.
    \end{equation}
    Using these  expressions, it follows that
    \begin{equation}
        dx^{123}+dx^{124}=dx^{12}\w d\lr{\sqrt{x_0^3+x_0^4}+\sqrt{\frac{f}{2}}t}^2=dx^{123}_0+dx^{124}_0+t dx^{12}_0\w d\sqrt{2\lr{x_0^3+x_0^4}f}.
    \end{equation}
    Noting that $\sqrt{x_0^3+x_0^4}={\sqrt{x^3+x^4}-\sqrt{\frac{f}{2}}t}$, we obtain the desired result
    \begin{equation}
        dx_0^{123}+dx^{124}_0=dx^{123}+dx^{124}-tdx^{12}\w d\sqrt{2\lr{x^3+x^4}f}.
    \end{equation}
\end{example}

\begin{example}
    Consider the $4$-form $w_0=dx^{1234}+dx^{1256}$ in $\mathbb{R}^6$. Let's consider a region $x^3+x^4\geq 0$, $x^5+x^6\geq 0$. Then, the vector field
    \eq{
        X=f\lr{x^1,x^2,x^3-x^4}\sqrt{x^3+x^4}\lr{\p_3+\p_4}+g\lr{x^1,x^2,x^5-x^6}\sqrt{x^5+x^6}\lr{\p_5+\p_6},
    }
    where $f\lr{x^1,x^2,x^3-x^4}$ and $g\lr{x^1,x^2,x^5-x^6}$ are
    arbitrary functions of their arguments, satisfies $\mc{L}_{X}^2w_0=0$. The correponding $4$-form $w=\lr{1-\mc{L}_X}w_0$ is
    \eq{
    w=w_0-\frac{f}{\sqrt{x^3+x^4}}dx^{1234}-\frac{g}{\sqrt{x^5+x^6}}dx^{1256}.
    }
\end{example}

\section{Remarks on the Jacobi identity and invariant measures in generalized Hamiltonian mechanics}
\label{sec:JacobiIdentity}

In this section we explore some of the consequences of the generalization of Hamiltonian mechanics introduced in the previous sections.
In particular, we discuss the formulation of the Jacobi identity and the availability of invariant measures in generalized Hamiltonian theories of degree $k$.

\begin{proposition}
    Let $J\in\mc{X}^k\lr{\M}$ denote a Poisson $k$-tensor. Then, $J$ satisfies the Jacobi indentity in terms of the Poisson bracket
    \eq{
        \lrc{f,g}=\iota_{df}\iota_{dg}\iota_{dC^1}...\iota_{dC^{k-2}}J,
    }
    where $dC^1,...,dC^{k-2}$ are those exact $1$-forms such that $\iota_{dC^1}...\iota_{dC^{k-2}}J=\mc{J}$ is a Poisson $2$-tensor, and $f,g\in C^{\infty}\lr{\M}$.
\end{proposition}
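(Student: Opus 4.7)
The strategy is essentially definitional: I would argue that the bracket in the statement coincides with the standard Poisson bracket associated to the Poisson $2$-tensor $\mc{J}$, whence the Jacobi identity is inherited for free from the definition of a Poisson $2$-tensor.

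First, I would unpack the definition of a Poisson $k$-tensor (Definition \ref{def:PoissonkTensor}): by hypothesis, $J\in\mc{X}^k(\M)$ admits exact 1-forms $dC^1,\dots,dC^{k-2}$ such that the bivector
\eq{\mc{J} := \iota_{dC^1}\cdots\iota_{dC^{k-2}}J \in \mc{X}^2(\M)}
is a Poisson $2$-tensor in the standard sense. In particular, the bracket $\{f,g\}_{\mc{J}} := \iota_{df}\iota_{dg}\mc{J}$ is by assumption a Poisson bracket, i.e.\ it is bilinear, antisymmetric, satisfies the Leibniz rule, and, crucially, obeys the Jacobi identity
\eq{\{\{f,g\}_{\mc{J}},h\}_{\mc{J}} + \{\{g,h\}_{\mc{J}},f\}_{\mc{J}} + \{\{h,f\}_{\mc{J}},g\}_{\mc{J}} = 0 \qquad \forall\, f,g,h \in C^{\infty}(\M).}

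The next step is then a one-line comparison: the bracket defined in the proposition is
\eq{\{f,g\} = \iota_{df}\iota_{dg}\iota_{dC^1}\cdots\iota_{dC^{k-2}}J = \iota_{df}\iota_{dg}\mc{J} = \{f,g\}_{\mc{J}},}
so that the bracket of $J$ constructed via contraction with the distinguished 1-forms $dC^1,\dots,dC^{k-2}$ is \emph{identical} to the Poisson bracket of $\mc{J}$. The Jacobi identity for $\{\cdot,\cdot\}$ is therefore immediate from the Jacobi identity for $\{\cdot,\cdot\}_{\mc{J}}$.

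There is no real obstacle here: the result is a tautology once one recognises that Definition \ref{def:PoissonkTensor} was engineered precisely so that the contractions with the $dC^i$ reduce the problem to the classical case. The only thing to be careful about is the ordering and signs of the interior products, which I would verify by observing that both sides are evaluated on the same elementary multivector $\partial_{j_1}\wedge\cdots\wedge\partial_{j_k}$ using the convention in \eqref{contr}; since both expressions contract $J$ with the same ordered string $dC^{k-2},\dots,dC^1,dg,df$ of 1-forms (up to a sign that cancels consistently on all three cyclic terms), no genuine calculation is required.
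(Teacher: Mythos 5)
Your proof is correct and is essentially identical to the paper's own argument: the bracket in the statement is by construction the standard Poisson bracket of the bivector $\mc{J}=\iota_{dC^1}\cdots\iota_{dC^{k-2}}J$, which satisfies the Jacobi identity by the very definition of a Poisson $k$-tensor. Nothing further is needed.
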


\begin{proof}
    This follows from the definition of Poisson $k$-tensor: the $2$-tensor  $\mc{J}=\iota_{dC^1}...\iota_{dC^{k-2}}J$    is a Poisson $2$-tensor, and thus satisfies the Jacobi identity.
\end{proof}

It is possible to express the Jacobi identity in terms of the components of $J$ in a local coordinate system:
\begin{proposition}
    Let $\lr{x^1,...,x^n}=\lr{x^1,...,x^{n-k+2},C^1,...,C^{k-2}}$ denote a local coordinate system in a neighborhood  $U\subset\M$ and consider a Poisson $k$-tensor $J\in \mc{X}^k\lr{\M}$ such that $\iota_{dC^1}...\iota_{dC^{k-2}}J=\mc{J}$ is a Poisson $2$-tensor. Then, in $U$, the Jacobi identity reads as
    \eq{
    J^{imr...n}J_{m}^{j\ell r...n}+J^{jmr...n}J_{m}^{\ell ir...n}+J^{\ell mr...n}J_{m}^{ijr...n}=0~~~~\forall i,j,\ell=1,...,n,~~~~r=n-k+3.\label{JIloc}
    }
\end{proposition}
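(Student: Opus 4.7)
The plan is to reduce the claim to the ordinary component-form Jacobi identity satisfied by the Poisson 2-tensor $\mc{J}$, by reading off, in the adapted chart, the relationship between the components of $J$ and those of $\mc{J}$. First I would note that in the chosen coordinates $dC^s = dx^{n-k+1+s}$, so the iterated interior product
\[
\mc{J} = \iota_{dC^1}\dots\iota_{dC^{k-2}} J
\]
can be evaluated term by term exactly as in the local calculations displayed at the end of the introduction (each $\iota_{dC^s}$ selects the index equal to $n-k+1+s$ and contributes a shuffle sign). The outcome is that, for $a,b \in \{1,\dots,n-k+2\}$,
\[
\mc{J}^{ab} = \varepsilon\, J^{ab\, r\, r+1\, \dots\, n}, \qquad r = n-k+3,
\]
for some overall sign $\varepsilon \in \{\pm 1\}$ depending only on $k$. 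Moreover, $\mc{J}^{ab}$ vanishes whenever $a$ or $b$ belongs to $\{r,\dots,n\}$, because $J^{ab\, r\,\dots\, n}$ then carries a repeated index and vanishes by antisymmetry of $J$; equivalently, the $C^s$ are Casimirs of $\mc{J}$, which is consistent with Definition \ref{def:PoissonkTensor}.

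Next, since by hypothesis $\mc{J}$ is a Poisson 2-tensor in the standard sense, it satisfies
\[
\mc{J}^{im}\partial_m \mc{J}^{j\ell} + \mc{J}^{jm}\partial_m \mc{J}^{\ell i} + \mc{J}^{\ell m}\partial_m \mc{J}^{ij} = 0
\qquad \forall\, i,j,\ell = 1,\dots,n.
\]
I would substitute the previous expression into this identity. The global factor $\varepsilon^2 = 1$ drops out uniformly from each cyclic term, producing exactly
\[
J^{imr\dots n}J^{j\ell r\dots n}_m + J^{jmr\dots n}J^{\ell i r\dots n}_m + J^{\ell m r\dots n}J^{ij r\dots n}_m = 0,
\]
which is the claimed identity \eqref{JIloc}.

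Finally, I would verify that the range of the contracted index $m$ is harmless. A priori $m$ runs over $1,\dots, n$, but any term with $m \in \{r,\dots, n\}$ already contains a repeated index in the factor $J^{\cdot\, m\, r\,\dots\, n}$ and hence vanishes by the antisymmetry of $J$; equivalently the same conclusion follows from $\mc{J}^{im}=0$ for such $m$. Thus the effective sum is over $m = 1,\dots, n-k+2$, and the identity holds for all free indices $i,j,\ell = 1,\dots,n$ as stated (the cases where some free index lies in $\{r,\dots, n\}$ reduce to $0=0$). The only potential obstacle is careful bookkeeping of the shuffle sign $\varepsilon$ and confirmation that it appears with the same parity in every cyclic term so that it cancels uniformly; this is purely combinatorial and requires no idea beyond the definition of the interior product and the antisymmetry of $J$.
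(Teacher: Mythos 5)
Your argument is essentially the paper's own proof, just with the bookkeeping made explicit: in the adapted chart the iterated contraction $\iota_{dC^1}\dots\iota_{dC^{k-2}}J$ equals, up to an overall sign, the bivector with components $J^{ab\,r\dots n}$, and the claimed identity is then exactly the standard component Jacobi identity for that Poisson $2$-tensor, the sign cancelling quadratically. The only blemish is a harmless off-by-one in your identification $dC^s=dx^{n-k+1+s}$ (it should be $dx^{n-k+2+s}$, so that $C^1,\dots,C^{k-2}$ occupy slots $r=n-k+3$ through $n$), which does not affect the argument.
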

\begin{proof}
    Note that
    \eq{
    \iota_{dC^1}...\iota_{dC^{k-2}}J=\sigma \sum_{i<j}J^{ijr...n}\p_{ij},~~~~r=n-k+3,
    }
    where $\sigma$ is a sign factor.
    It is now clear that \eqref{JIloc} is the Jacobi identity for the Poisson $2$-tensor $\sum_{i<j}J^{ijr...n}\p_{ij}$, where $r=n-k+3$.
\end{proof}

\begin{remark}
    Any locally flat $k$-tensor $J\in \mc{X}^k\lr{\M}$  is a Poisson $k$-tensor in any sufficiently small neighborhood $U\subset\M$. Indeed, denoting with $\lr{x^1,...,x^n}$ the local flattening coordinates such that $J^{i_1...i_k}\in\mathbb{R}$, the $2$-tensor $\iota_{dx^{n-k+3}}...\iota_{dx^n}J$
    has constant components, and thus locally defines a Poisson $2$-tensor. 
\end{remark}

Poisson $k$-tensors also has a relation to the fundamental identity encountered in the axiomatic formulation of Nambu(FI) mechanics.
The fundamental identity for the triple bracket $\lrc{\cdot,\cdot,\cdot}:C^{\infty}\lr{\M}\times C^{\infty}\lr{\M}\times C^{\infty}\lr{\M}\rightarrow C^{\infty}\lr{\M}$ describes distribution of time derivatives \cite{Takhtajan1994},
\begin{equation}
    \frac{d}{dt}\lrc{f,g,h}=\lrc{\frac{df}{dt},g,h}+\lrc{f,\frac{dg}{dt},h}+\lrc{f,g,\frac{dh}{dt}},\label{ddt}
\end{equation}
where time evolution is assigned according to $df/dt=\lrc{f,H,C}=J^{ijk}f_iH_jC_k$ with
$f,g,h,C,H\in C^{\infty}\lr{\M}$.

\begin{proposition}
    Let $\lr{x^1,x^2,x^3}=\lr{x^1,x^2,C}$ denote a local coordinate system in a neighborhood $U\subset\M$.
    If $J\in \mc{X}^3\lr{\M}$ satisfies the fundamental identity, then $J$ satisfies the Jacobi identity,
    \begin{equation}
        J^{imn}J^{jkn}_m+J^{jmn}J^{kin}_m+J^{kmn}J^{ijn}_m=0~~~~\forall i,j,k=1,...,n.\label{GJ102}
    \end{equation}
    The converse is not true: a $3$-tensor $J$ satisfying the Jacobi identity is not guaranteed to satisfy the fundamental identity.
\end{proposition}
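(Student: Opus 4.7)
The proposition splits into two essentially independent parts: the forward implication is a coordinate specialisation of the fundamental identity, and the converse requires an explicit counterexample.

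For the forward direction, my plan is to expand the Filippov--Takhtajan FI
\eq{
\lrc{f_1,f_2,\lrc{g_1,g_2,g_3}}=\lrc{\lrc{f_1,f_2,g_1},g_2,g_3}+\lrc{g_1,\lrc{f_1,f_2,g_2},g_3}+\lrc{g_1,g_2,\lrc{f_1,f_2,g_3}}
}
in coordinates by taking each entry to be a coordinate function; this produces the classical index identity
\eq{
J^{\alpha\beta m}\p_m J^{abc}=J^{mbc}\p_m J^{\alpha\beta a}+J^{amc}\p_m J^{\alpha\beta b}+J^{abm}\p_m J^{\alpha\beta c}.
}
The key step is then to place both of the ``distinguished'' slots along the Casimir direction by setting $\beta=c=n$, with $C=x^n$. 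Total antisymmetry kills $J^{\alpha nn}=0$ and the last right-hand-side term drops out. Introducing the bivector $P^{ij}:=J^{ijn}$ and using $J^{\alpha n m}=-P^{\alpha m}$, $J^{\alpha n a}=-P^{\alpha a}$, $J^{\alpha n b}=-P^{\alpha b}$, what remains collapses to $P^{\alpha m}\p_m P^{ab}=P^{mb}\p_m P^{\alpha a}+P^{am}\p_m P^{\alpha b}$; a short rearrangement using $P^{mb}=-P^{bm}$ and $P^{\alpha b}=-P^{b\alpha}$ then yields
\eq{
P^{\alpha m}\p_m P^{ab}+P^{am}\p_m P^{b\alpha}+P^{bm}\p_m P^{\alpha a}=0,
}
which, after the relabelling $(\alpha,a,b)\mapsto(i,j,k)$, is exactly \eqref{GJ102}.

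For the converse, my plan is to exhibit a concrete counterexample showing that FI is strictly stronger than \eqref{GJ102}. On $\mathbb{R}^5$, take the constant non-decomposable $3$-tensor
\eq{J=\p_1\w\p_2\w\p_3+\p_1\w\p_4\w\p_5,}
whose induced bivector $P^{ij}=J^{ij5}$ equals the constant simple bivector $\p_1\w\p_4$. All partial derivatives of the $P^{ij}$ vanish, so \eqref{GJ102} holds trivially. FI, however, fails: choosing $f_1=x^1x^2$, $f_2=x^3$, $g_1=x^1$, $g_2=x^4$, $g_3=x^5$ one computes $\lrc{f_1,f_2,\cdot}=x^1\p_1-x^2\p_2$, so the left-hand side of the FI evaluates to $\lrc{f_1,f_2,J^{145}}=(x^1\p_1-x^2\p_2)(1)=0$, whereas the right-hand side equals $\lrc{x^1,x^4,x^5}+0+0=J^{145}=1$. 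A Hamiltonian-mechanics counterpart of the same obstruction is the system worked out in the second example of Section~\ref{sec:examples}.

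The only delicate point is the sign/antisymmetry bookkeeping when reducing the coordinate FI; once the abbreviation $P^{ij}=J^{ijn}$ is in place and antisymmetry is applied consistently, both halves of the proposition follow immediately.
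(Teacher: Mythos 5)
Your argument is correct, and the forward direction follows the paper's own route: the paper likewise expands the fundamental identity in coordinates (obtaining its conditions (FIa)--(FIb)) and then sets the two distinguished indices equal to $n$ (the Casimir direction) to collapse the identity onto the bivector $P^{ij}=J^{ijn}$, exactly as you do. The only real difference is in the converse: the paper merely asserts that \eqref{GJ102} does not imply the fundamental identity and defers to the semiclassical-oscillator example of Section~\ref{sec:examples} (a constant, non-decomposable $3$-tensor on $\mathbb{R}^6$), whereas you supply a self-contained counterexample on $\mathbb{R}^5$, $J=\p_1\w\p_2\w\p_3+\p_1\w\p_4\w\p_5$, with an explicit choice of test functions for which the two sides of the FI evaluate to $0$ and $1$. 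I checked your computation ($\lrc{x^1x^2,x^3,\cdot\,}=x^1\p_1-x^2\p_2$, so the right-hand side reduces to $\lrc{x^1,x^4,x^5}=J^{145}=1$ while the left-hand side vanishes) and it is correct; since $J$ is constant, \eqref{GJ102} holds trivially, so the counterexample does the job and in fact makes the converse more rigorous than the paper's "it is however clear" remark.
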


\begin{proof}
    Explicitly, equation \eqref{ddt} gives the conditions for all $i,j,k,u,v=1,...,n$,
    \begin{subequations}
        \begin{align}
            J^{uvq}J_{u}^{ijk} & =J^{ujk}J_{u}^{ivq}+J^{uki}J_u^{jvq}+J^{uij}J_u^{kvq},\label{FIa} \\
            0                  & =J^{ijk}J^{uvq}+J^{qjk}J^{uiv}+J^{ujk}J^{iqv}.\label{FIb}
        \end{align}
    \end{subequations}
    In equation \eqref{FIa}, set $q=k=n$ to obtain,
    \begin{equation}
        J^{uvn}J_u^{ijn}=J^{ujn}J_u^{ivn}+J^{iun}J_u^{jvn},
    \end{equation}
    which is equivalent to equation \eqref{GJ102}.
    It is however clear that \eqref{GJ102} is not sufficient to fulfill \eqref{FIa} and \eqref{FIb}.
\end{proof}
Section 6.2 below gives an explicit example in which a Poisson $3$-tensor fails to satisfy the fundamental identity.


\begin{proposition}
    Let $J\in\mc{X}^k\lr{\M}$
    denote a Poisson $k$-tensor.
    Assume that the associated Poisson $2$-tensor  $\iota_{dC^1}...\iota_{dC^{k-2}}J$ has constant rank $2m=n-s$ in $\M$. Then, $J$
    admits a local  $\Delta$-inverse $w\in\Omega^3\lr{U}$: 
    for any $p\in \M$ there exists a neighborhood $U$ of $p$ and a local coordinate system $\lr{x^1,...,x^n}=\lr{x^1,...,x^{2m},C^1,...,C^s}$, $s\geq k-2$, such that the local  $\Delta$-inverse $w$ has expression
    \begin{equation}
        w=\omega_c\w dC^1\w ...\w dC^{k-2},~~~~\omega_c=\sum_{i=1}^m dx^i\w dx^{m+i}~~~~{\rm in}~~U,
    \end{equation}
    with $\Delta=\lrc{X\in\mc{X}\lr{U}:\iota_XdC=0,dC=dC^1\w...\w dC^s}$.
\end{proposition}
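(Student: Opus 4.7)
The plan is to reduce this to the classical Lie--Darboux (equivalently, Weinstein splitting) theorem applied to the associated Poisson $2$-tensor $\mc{J}=\iota_{dC^1}\dots\iota_{dC^{k-2}}J$ and then verify the $\Delta$-inverse property by a direct computation in the resulting adapted coordinates.

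First, I would invoke the classical Lie--Darboux theorem for Poisson manifolds of constant rank on $\mc{J}$. Since $\mc{J}$ has constant rank $2m=n-s$, its kernel is spanned by $s$ independent Casimir $1$-forms. By hypothesis the given $dC^1,\dots,dC^{k-2}$ are linearly independent closed elements of $\ker\mc{J}$, so they can be completed (at least locally near any $p\in\M$) to a full set of independent Casimir functions $C^1,\dots,C^s$ with $s\ge k-2$, and the Darboux theorem then supplies coordinates $(x^1,\dots,x^{2m},C^1,\dots,C^s)$ on a neighborhood $U$ of $p$ in which $\mc{J}=\sum_{i=1}^{m}\p_i\w\p_{m+i}$.

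In these coordinates I would define $\omega_c=\sum_{i=1}^m dx^i\w dx^{m+i}$ and $w=\omega_c\w dC^1\w\dots\w dC^{k-2}$. Verification that $w$ is a $\Delta$-inverse of $J$ reduces to a computation: for any $X\in\mc{X}_\Delta(U)$ the coordinate expansion is $X=\sum_{i=1}^{2m}X^i\p_i$, so $\iota_X dC^j=0$ for $j=1,\dots,s$ and
\[
\iota_X w=(\iota_X\omega_c)\w dC^1\w\dots\w dC^{k-2}.
\]
Setting $\alpha=\iota_X\omega_c$, the contraction conventions of Section~1.1 give
\[
\iota_{\iota_Xw}J=\iota_{dC^{k-2}}\dots\iota_{dC^1}\iota_\alpha J=\pm\,\iota_\alpha\bigl(\iota_{dC^1}\dots\iota_{dC^{k-2}}J\bigr)=\pm\,\iota_\alpha\mc{J},
\]
after moving $\iota_\alpha$ past the $k-2$ interior products (interior products by $1$-forms anticommute on multivectors). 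A short computation with $\omega_c$ and $\mc{J}$ in Darboux coordinates gives $\iota_\alpha\mc{J}=\iota_{\iota_X\omega_c}\mc{J}=\pm X$, so $\iota_{\iota_Xw}J=\pm X$, establishing (up to sign, consistent with the Corollary following Theorem~\ref{thm:classicalFromGeneralised}) that $w$ is the desired $\Delta$-inverse.

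The main technical obstacle is not the inverse verification, which is essentially the classical symplectic duality between $\omega_c$ and $\mc{J}$ on a symplectic leaf, but rather confirming in Step~1 that one can simultaneously (i) achieve the canonical form for $\mc{J}$ and (ii) use the specific Casimirs $C^1,\dots,C^{k-2}$ as part of the transverse coordinates. This is possible because the Weinstein splitting theorem leaves the choice of Casimir coordinates free up to a smooth change of variables in the transverse directions, so any prescribed independent family of Casimirs can be extended to complete coordinates, giving the inequality $s\ge k-2$ asserted in the statement. The remaining sign bookkeeping from the reordering of interior products is harmless and can be absorbed by reordering the $dC^j$ in the definition of $w$ if desired.
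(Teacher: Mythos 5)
Your proposal is correct and follows essentially the same route as the paper: apply the constant-rank Lie--Darboux theorem to $\mc{J}=\iota_{dC^1}\dots\iota_{dC^{k-2}}J$ to obtain coordinates $(x^1,\dots,x^{2m},C^1,\dots,C^s)$ in which $\mc{J}$ is canonical, set $w=\omega_c\w dC^1\w\dots\w dC^{k-2}$, and verify $\iota_{\iota_Xw}J=X$ for $X\in\mc{X}_\Delta$ by direct computation. The only difference is cosmetic: the paper fixes the sign once and for all by normalizing $\iota_{dC^1}\dots\iota_{dC^{k-2}}J=(-1)^{k-1}\sum_i\p_i\w\p_{m+i}$, whereas you defer it to a final reordering of the $dC^j$, which is equally legitimate.
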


\begin{proof}
    Since $J$ is a Poisson $k$-tensor, we may choose coordinates  $\lr{x^1,...,x^n}=\lr{x^1,...,x^{2m},C^1,...,C^s}$ in a sufficiently small neighborhood $U$ of any $p\in \M$.
    Here, $C^1,...,C^{k-2}$ are those functions such that $\iota_{dC^1}...\iota_{dC^{k-2}}J$ is a Poisson $2$-tensor, while $dC^{k-1},...,dC^{s}$ are those additional  exact $1$-forms belonging to the kernel of $\iota_{dC^1}...\iota_{dC^{k-2}}J$.
    Since $\iota_{dC^1}...\iota_{dC^{k-2}}J$ is a Poisson $2$-tensor, the local coordinates can be chosen so that $\iota_{dC^1}...\iota_{dC^{k-2}}J=\lr{-1}^{k-1}\sum_{i=1}^m\p_{i}\w \p_{m+i}$ \cite{Littlejohn82}.
    It follows that $w=\omega_c\w dC^1\w...\w dC^{k-2}$ with $\omega_c=\sum_{i=1}^mdx^i\w dx^{m+i}$ is the local $\Delta$-inverse of $J$. Indeed, taking $X\in \Delta$,
    \eq{
    \iota_{\iota_Xw}J=&J\lr{\sum_{i=1}^m \lr{X^idx^{m+i}-X^{m+i}dx^i}\w dC^{1}\w...\w dC^{k-2}}\\=&
    \sum_{i=1}^m\sum_{j_1<...<j_{k}}J^{j_1...j_k}X^i\langle dx^{m+i}\w dC^1\w...\w dC^{k-2},\p_{j_1...j_{k-1}}\rangle\p_{j_k}+...
    \\=&
    \lr{-1}^{k-1}
    \sum_{i=1}^m\sum_{j_1<...<j_{k}}J^{j_1...j_k}X^i\langle dx^{m+i}\w dC^1\w...\w dC^{k-2},\p_{j_2...j_k}\rangle\p_{j_1}+...\\=&\sum_{i=1}^m
    \lr{-1}^{k-1}
    J^{i\,m+i\,n-s+1...n-s+1+k-2}X^i\p_{i}+...=\sum_{i=1}^m\lr{X^i\p_i+X^{m+i}\p_{m+i}}.
    }
\end{proof}

We conclude this section with some remarks on the availability of invariant measures in a generalized Hamiltonian theory of degree $k$.

\begin{mydef}\label{def:volumePreservation}
    Let $d\mu$ be a smooth volume form on $M$.
    Consider the vector field $X=-\iota_{dH^1}...\iota_{dH^{k-1}}J$, with
    $J\in\mc{X}^k\lr{M}$
    and $H^1,...,H^{k-1}\in C^{\infty}\lr{M}$.
    We say that $J$
    is measure preserving whenever there exists a non-vanishing function $g\in C^{\infty}\lr{M}$ such that
    \eq{
    \mc{L}_X \lr{g d\mu}=0,~~~~\forall H^1,...,H^{k-1}\in C^{\infty}\lr{M}.
    }
\end{mydef}

\begin{remark}
    In coordinates $\lr{x^1,...,x^n}$ and with $d\mu=dx^1\w...\w dx^n$,  Definition  \ref{def:volumePreservation} corresponds to the condition
    \begin{equation}
        \frac{\p}{\p x^{i}}\lr{gJ^{ij_1...j_{k-1}}}=0,~~~~\forall j_1,...,j_{k-1}=1,...,n,\label{mesp}
    \end{equation}
    which ensures $gdx^1\w...\w dx^n$ is an invariant measure for all choices of the Hamiltonians $H^1,...,H^{k-1}$.
\end{remark}

Even if \eqref{mesp} is not satisfied, an invariant measure may be recovered by adding a new dimension
with coordinate $x^{n+1}$. The following proposition verifies this for $k=3$ and $\M \simeq \R^n$.

\begin{proposition}
    Take $J\in\mc{X}^3\lr{\M}$. 
    Let $\lr{x^1,...,x^n}$ denote local coordinates in some neighborhood $U\subset\M$.
    Then, the
    $n+1$-dimensional antisymmetric contravariant $3$-tensor
    $\mf{J}\in\mc{X}^3\lr{U\cp\mathbb{R}}$ given by
    \eq{
    \mf{J}=J-\sum_{i<j<k}x^{n+1}\lr{\frac{\p J^{mij}}{\p x^m}\delta_{n+1}^k+\frac{\p J^{mjk}}{\p x^m}\delta_{n+1}^i+\frac{\p J^{mki}}{\p x^m}\delta_{n+1}^j}\p_i\w\p_j\w\p_{k},\\
    }
    is measure preserving in $U$. 
    Furthermore, the equations of motion $dx^i/dt=J^{ijk}H_jC_k$, $i=1,...,n$, with the Hamiltonians $H$ and $C$ independent of $x^{n+1}$, remain unchanged in $U$, that is
    \sys{
    &\frac{dx^i}{dt}=\mf{J}^{ijk}H_jC_k=J^{ijk}H_jC_k,~~~~i=1,...,n,\\
    &\frac{dx^{n+1}}{dt}=\mf{J}^{n+1jk}H_jC_k=-x^{n+1}\sum_{i=1}^n\p_i X^i,
    }{mesp2}
    where, as usual, $X=d\bol{x}/dt$.
\end{proposition}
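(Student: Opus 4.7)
The plan is to extract the components of $\mf{J}$ in the extended chart $(x^1,\dots,x^n,x^{n+1})$ and then verify the three conclusions by direct computation. Because the outer sum is restricted to $i<j<k\in\{1,\dots,n+1\}$, only the Kronecker delta $\delta_{n+1}^{k}$ can ever contribute, since $i=n+1$ or $j=n+1$ would contradict the strict ordering. Hence the only components of $\mf{J}$ that differ from those of $J$ are, up to antisymmetry,
\begin{equation}
    \mf{J}^{ij(n+1)} = -x^{n+1}\p_m J^{mij},\qquad i,j\in\{1,\dots,n\},
\end{equation}
with $\mf{J}^{abc}=J^{abc}$ for $a,b,c\leq n$ and $\mf{J}^{(n+1)j(n+1)}=0$ by antisymmetry.

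The first $n$ equations of motion then follow immediately: because $H$ and $C$ do not depend on $x^{n+1}$, one has $H_{n+1}=C_{n+1}=0$, so every contraction $\mf{J}^{ijk}H_jC_k$ in which an index equals $n+1$ annihilates, leaving $dx^i/dt=J^{ijk}H_jC_k=X^i$ for $i\leq n$. For the $(n+1)$st equation, the explicit formula for the extended component yields $dx^{n+1}/dt=-x^{n+1}(\p_mJ^{mjk})H_jC_k$, and identification with $-x^{n+1}\sum_i\p_iX^i$ comes from expanding
\begin{equation}
    \sum_{i=1}^n\p_iX^i = (\p_mJ^{mjk})H_jC_k + J^{ijk}H_{ij}C_k + J^{ijk}H_jC_{ik},
\end{equation}
where the last two terms vanish because $J^{ijk}$ is antisymmetric in each pair of indices while $H_{ij}$ and $C_{ik}$ are symmetric.

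For measure preservation I take $g\equiv 1$, so that $d\mu = dx^1\wedge\cdots\wedge dx^{n+1}$, and verify $\p_\alpha\mf{J}^{\alpha jk}=0$ for every $j,k\in\{1,\dots,n+1\}$. There are three cases: (a) if $j,k\leq n$, the $\alpha\leq n$ contribution $\p_mJ^{mjk}$ is exactly cancelled by $\p_{n+1}\mf{J}^{(n+1)jk}=-\p_mJ^{mjk}$; (b) if exactly one of $j,k$ equals $n+1$, only $\alpha\leq n$ contributes and one obtains $-x^{n+1}\p_i\p_mJ^{mij}$, which vanishes by commutativity of partials together with antisymmetry of $J^{mij}$ in $(m,i)$; (c) if $j=k=n+1$, the statement is trivial by antisymmetry. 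The one step that is not pure bookkeeping is the antisymmetry/symmetric-Hessian cancellation in the divergence identity, but this is the standard mechanism underlying Liouville-type results, so I expect no genuine obstacle.
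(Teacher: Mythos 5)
Your proposal is correct and follows exactly the route the paper sketches: identify the only nontrivial new components $\mf{J}^{ij(n+1)}=-x^{n+1}\p_mJ^{mij}$, verify $\sum_{\alpha=1}^{n+1}\p_\alpha\mf{J}^{\alpha jk}=0$ case by case (with $g\equiv 1$), and evaluate $\mf{J}^{ajk}H_jC_k$ using $H_{n+1}=C_{n+1}=0$ together with the antisymmetry/symmetric-Hessian cancellation to identify $(\p_mJ^{mjk})H_jC_k$ with $\sum_i\p_iX^i$. The paper's proof is only a one-line sketch of this same computation, so your write-up simply supplies the details it omits.
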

\begin{proof}
    The proof of this statement can be obtained by noting that $\sum_{i=1}^{n+1}\p_i\mf{J}^{ijk}=0$ and by evaluating  $dx^i/dt=\mf{J}^{ijk}H_jC_k$  for $i=1,...,n+1$.
\end{proof}

\section{Examples} \label{sec:examples}

In this section, three examples are provided of generalized Hamiltonian systems of degree $k=3$.
In the first example, it is shown that quasisymmetric magnetic fields can be written as a Nambu system.  Quasisymmetric magnetic fields play a pivotal role in the design of next-generation fusion reactors known as stellarators \cite{Rod,Hel,Burby}.
This novel Nambu system provides a novel pathway to study quasisymmetric magnetic fields. Furthermore, this formulation
highlights previously unnoticed properties of quasisymmetric magnetic fields, such as the existence of an invariant measure for the quasisymmetry.
The aim of the second example is to demonstrate the existence of a classical Hamiltonian system with two invariants whose
$3$-tensor $J$ fails to satisfy the fundamental identity of Nambu(FI) mechanics. However, the system still admits a description as a generalized Hamiltonian sytem of degree $k=3$ in accordance with Theorem \ref{thm:wFromOmega}.
Finally, the third example concerns a $4$-dimensional dynamical system
endowed with an antisymmetric contravariant $2$-tensor $\mc{J}$ failing to satisfy the Jacobi identity, and yet possessing a generalized Hamiltonian structure.
It is shown that the hypothesis of Theorem \ref{thm:classicalFromGeneralised} are verified by this system. Consequently, classical Hamiltonian systems can be recovered on some $2$-dimensional submanifolds of the original $4$-dimensional domain.

\subsection{Quasisymmetry as a generalized Hamiltonian system}

Given a magnetic field $\bol{B}\in \mc{X}\lr{\M}$ in some region $\M\subset\mathbb{R}^3$ and a function $\Psi\in C^{\infty}\lr{\M}$, a quasisymmetry $\bol{u}\in \mc{X}\lr{\M}$ of $\bol{B}$ is any solution to
\begin{equation}
    \nabla\cdot\bol{B}=0,~~~~\nabla\cdot\bol{u}=0,~~~~\bol{B}\cp\bol{u}=\nabla\Psi,~~~~\bol{u}\cdot\nabla {B}=0,
\end{equation}
where $B=\abs{\bol{B}}$.
Consider a magnetic field $\bol{B}$ such that, on a region $U\subset\M$, it holds that $\nabla\Psi\cp\nabla B\neq 0$ as well as $\bol{B}\cdot\nabla B\neq 0$. Then, in $U$, it can be computed that
\begin{equation}
    \bol{u}=\frac{\nabla\Psi\cp\nabla B}{\bol{B}\cdot\nabla B},~~~~\bol{B}\cdot\nabla B=f\lr{\Psi,B},~~~~\bol{B}\cdot\nabla\Psi=0, \label{QS2}
\end{equation}
for some $f\lr{\Psi,B} \neq 0$. The quasisymmetry $\bol{u}$ can be regarded as a generalized Hamiltonian system on $U$. Indeed, consider the $3$-form $w$ given by the closed form
\begin{equation}
    w=-\lr{\bol{B}\cdot\nabla B} d \mu,
\end{equation}
where $d \mu$ is the standard Euclidean volume form on $U$.
Then
\begin{equation}
    \iota_{\bol{u}} w =  -\iota_{\nabla\Psi \times \nabla B}d\mu = -d\Psi \w dB.
\end{equation}
Hence $\bol{u}$ is a Hamiltonian vector field for the closed $3$-form $w$ with Hamiltonians $\Psi, B$. Note that, as $w$ is a top form on $U$ and $\mc{L}_{\bol{u}} w = 0$, that $w$ is an invariant measure for $\bol{u}$.

In any local chart on $U$ with standard Euclidean coordinates, we have the associated differential equation for $\bol{u}$ given by
\begin{equation}
    \frac{d\bol{x}}{dt}=\bol{u}=\frac{\epsilon^{ijk}}{\bol{B}\cdot\nabla B}\frac{\p\Psi}{\p x^j}\frac{\p B}{\p x^k}\p_i,
\end{equation}
with Hamiltonians $\Psi$ and $B$ and a Poisson $3$-tensor
\begin{equation}
    J^{ijk}=\frac{\epsilon^{ijk}}{\bol{B}\cdot\nabla B}.
\end{equation}

\subsection{Semiclassical quantum oscillators}

Consider the following dynamical system on $\R^6$ from \cite{Horikoshi2021,Horikoshi2023} that models a $1$-dimensional system of two quantum oscillators. The dynamical variables are $p_i$, $q_i$, $\xi_i$, $i=1,2$, and physically correspond to the expectation values of momentum, position, and squared momentum of the $i^{\text{th}}$ oscillator, respectively. The equations of motion are given by
\begin{subequations}
    \begin{align}
        \dot{p}_1   & = -q_1-\lambda\xi_2,     \\
        \dot{q}_1   & = p_1,                   \\
        \dot{\xi}_1 & = 2q_1 p_1,              \\
        \dot{p}_2   & = -q_2-2\lambda q_1 q_2, \\
        \dot{q}_2   & = p_2,                   \\
        \dot{\xi}_2 & = 2q_2 p_2.
    \end{align}\label{Hori}
\end{subequations}
One can verify that the quantities
\begin{equation}
    G_1=\xi_1-q_1^2,\quad G_2=\xi_2-q_2^2,\quad  H=\frac{1}{2}\lr{p_1^2+p_2^2+\xi_1+\xi_2}+\lambda q_1\xi_2
\end{equation}
are independent constants of motion, with the third quantity $H$ corresponding to the energy of the system. The dynamical system \eqref{Hori} can be expressed as a generalized Hamiltonian system. Indeed, defining  
\eq{ J =\p_{p_1}\w\p_{q_1}\w\p_{\xi_1}+\p_{p_2}\w\p_{q_2}\w\p_{\xi_2},\qquad \{f,g,h\} := -\iota_{df}\iota_dg\iota_dh J }
for any $f,g,h\in C^\infty(\R^6)$, and taking $G = G_1 + G_2$, we have 
\eq{ \frac{d}{dt} F = \{ F,G,H \} }
for any observable $F \in C^\infty(\R^6)$. In particular, for $i=1,2$,
\eq{ \dot{p}_i = \{p_i, G,H\},\quad \dot{q}_i = \{q_i,G,H\},\quad \dot{\xi}_i = \{\xi_i,G,H\}.}

As shown in \cite{Sato21}, the $k$-vector $J$ does not satisfy the fundamental identity.
On the other hand, performing the change of variables
$\lr{p_1,q_1,\xi_1,p_2,q_2,\xi_2}\rightarrow\lr{p_1,q_1,G_1,p_2,q_2,G_2}$, the Hamiltonian function becomes,
\begin{equation}
    \tilde{H}=\frac{1}{2}\lr{p_1^2+p_2^2+q_1^2+q_2^2}+\lambda q_1\lr{G_2+q_2^2},
\end{equation}
while system \eqref{Hori} takes the form
\eq{ \dot{p}_i = -\frac{\p \tilde{H}}{\p q_i},\quad \dot{q}_i = \frac{\p \tilde{H}}{\p p_i},\quad \dot{G}_i = 0, \label{Hori3} }
for $i=1,2$.
Equation \eqref{Hori3} shows that system \eqref{Hori} is a noncanonical Hamiltonian system with Casimir invariants $G_1$ and $G_2$. By application of Theorem \ref{thm:wFromOmega}, the system is endowed with a generalized Hamiltonian structure with symplectic $3$-form
\begin{equation}
    w=\omega\w dG_2=\lr{dp_1\w dq_1+dp_2\w dq_1}\w dG_2.
\end{equation}
In addition, the system also admits the invariant measure $dp_1\w dq_1\w dp_2\w dq_2\w dG_1\w dG_2$, in ageement with Corollary \ref{cor:localInvariantMeasure}.

\subsection{An example of application of {Theorem \ref{thm:classicalFromGeneralised}}}

Let $\lr{x^1,...,x^4}$ denote standard Euclidean coordinates on $\mathbb{R}^4$ and consider the domain $\M=\{x^4>0\}$. In $\M$, we define the $2$-vector field $\mc{J}$ as
\begin{equation}
    \mc{J}=\frac{1}{x^4}\lr{\p_2\w\p_1+\p_4\w\p_3}
\end{equation}
The Jacobi identity is not satisfied by this tensor, as one can verify by computing the term
\eq{
    \mc{J}^{3m}\mc{J}_{m}^{12}+\mc{J}^{1m}\mc{J}_{m}^{23}+\mc{J}^{2m}\mc{J}_{m}^{31}=-\frac{1}{\lr{x^4}^3}\neq 0.
}
Let $H\in C^{\infty}(\M)$ be any function satisfying $\frac{\partial H}{\partial x^3}=0$. Then define
\eq{
    X=-\iota_{dH}\mc{J}=\frac{1}{x^4}\lr{\frac{\p H}{\p x^1}\p_2- \frac{\p H}{\p x^2}\p_1- \frac{\p H}{\p x^4}\p_3}.
}
Next, observe that the $2$-form
\eq{
    \omega=x^4\lr{dx^1\w dx^2+dx^3\w dx^4},
}
satisfies
\eq{
    \iota_X\omega=-dH,~~~~d\omega=dx^4\w dx^1\w dx^2\neq 0.
}
On the other hand, the $3$-form $w = \omega \w dx^4$ 
satisfies
\eq{
    \iota_Xw=-dH\w dx^4,~~~~dw=d\omega\w dx^4=0.
}
Hence, this $4$-dimensional  dynamical system does not possess a classical Hamiltonian structure, and yet it can be cast as a generalized Hamiltonian system. 

Next, let us further assume that \( \frac{\partial H}{\partial x^4} = 0 \). Under this assumption, the 
vector field 
\eq{
X = \frac{1}{x^4} \left( \frac{\partial H}{\partial x^1} \partial_2 - \frac{\partial H}{\partial x^2} \partial_1 \right), 
}
satisfies the relation:
\eq{
\iota_X \omega = -dH.  
}
On the other hand, the \( 4 \)-form \( w = \omega \wedge dx^3 \wedge dx^4 \) satisfies:
\eq{
\iota_X w = -dH \wedge dx^3 \wedge dx^4, \quad dw = d\omega \wedge dx^3 \wedge dx^4 = 0.
}
Furthermore, the hypotheses of Theorem \ref{thm:classicalFromGeneralised} are now verified for \( w \), where
\eq{
X_H = \frac{1}{x^4} \left( \frac{\partial H}{\partial x^1} \partial_2 - \frac{\partial H}{\partial x^2} \partial_1 \right),
}
for any \( H \in C^{\infty}(M) \). This implies that we expect to find a classical Hamiltonian system on the submanifolds defined by:
\eq{
\Sigma_c = \left\{ \bol{x} \in M : x^3 = c^3 \in \mathbb{R}, \, x^4 = c^4 \in \mathbb{R}_{\neq 0} \right\}.
}
One can verify that the pullbacks \( \tilde{\omega} = \iota^*_c \omega \) and \( \tilde{H} = \iota^*_c H \), along with the restriction \( \tilde{X}_H \) of \( X_H \) to \( \Sigma_c \), satisfy:
\eq{
\iota_{\tilde{X}} \tilde{\omega} = -d\tilde{H}, \quad d\tilde{\omega} = 0, \quad \tilde{\omega} = c^4 dx^1 \wedge dx^2.
}

\section{Concluding remarks}
\label{sec:ConcludingRemarks}

In this work,
we explored the connection between
generalized Hamiltonian mechanics (intended as the ideal dynamics of
systems endowed with a phase space conservation law embodied by a closed differential form and a matter conservation law encoded in multiple Hamiltonians) and multisyplectic geometry (the geometry of closed differential forms).
The key of the construction is the notion of invertibility of differential forms, which can be conjugated into (strong) invertibility \eqref{sinv} and weak invertibility \eqref{winv}.
When $k>2$, (strong) invertibility
of a differential form of degree $k$
is no longer sufficient to ensure the existence of a solution $X$ to the equation $\iota_Xw=-\sigma$, where $X$ is a vector field and $\sigma$ a given differential form of degree $k-1$. Indeed,
the space of differential $k-1$-forms has dimension $\binom{n}{k-1}$, which is greater than the dimension of the tangent space whenever $\binom{n}{k-1}>n$ (when $k=2$, this gives $n>3$).
This subtlety is the reason why
a straightforward connection between
the closed $k$-form $w$ and
its inverse antisymmetric $k$-tensor $J$ cannot be established, and Moser's method cannot be generally applied to find a local coordinate change mapping $w$ to a flat (constant) $k$-form.
The correspondence between generalized Hamiltonian mechanics and multisyplectic geometry can however be recovered
by properly taking into account the multiple invariants that characterize generalized dynamics. In addition, a precise correspondence
between generalized Hamiltonian mechanics and classical Hamiltonian mechanics can be obtained, as described in section 3.
In particular, we find that classical Hamiltonian mechanics represents a subset of generalized Hamiltonian mechanics, in contrast with  the formulation of Nambu(FI) mechanics based  on the fundamental identity.

\section*{Acknowledgment}
N.S. would like to thank Z. Yoshida and P. J. Morrison for useful discussion.

\section*{Statements and declarations}

\subsection*{Data availability}
Data sharing not applicable to this article as no datasets were generated or analysed during the current study.

\subsection*{Funding}
The research of NS was partially supported by JSPS KAKENHI Grant No. 21K13851, No. 22H04936, and No. 24K00615. This work was partly supported by MEXT Promotion of Distinctive Joint Research Center Program JPMXP0723833165.

\subsection*{Competing interests}
The authors have no competing interests to declare that are relevant to the content of this article.

\end{document}